\newtheorem{theorem}{Theorem}[section]
\newtheorem{lemma}[theorem]{Lemma}
\newtheorem{proposition}[theorem]{Proposition}
\newtheorem{corollary}[theorem]{Corollary}
\newtheorem{conjecture}[theorem]{Conjecture}
\theoremstyle{definition}
\newtheorem{example}[theorem]{Example}
\theoremstyle{remark}
\newtheorem{remark}[theorem]{Remark}
\newcommand{\HH}{\mathrm{H}} \newcommand{\TT}{\mathrm{T}}
\begin{document}

\title{A coin flip game and generalizations of Fibonacci numbers}
\author{Jia Huang}
\address{Department of Mathematics and Statistics, University of Nebraska at Kearney, Kearney, NE 68849, USA}
\email{huangj2@unk.edu}

\keywords{Coin flip; Fibonacci number}

\subjclass{05A15, 05A19}

\begin{abstract}
We study a game in which one keeps flipping a coin until a given finite string of heads and tails occurs.
We find the expected number of coin flips to end the game when the ending string consists of at most four maximal runs of heads or tails or alternates between heads and tails.
This leads to some summation identities involving certain generalizations of the Fibonacci numbers.
\end{abstract}

\maketitle

\section{Introduction}\label{sec:intro}

In an X-post on March 16 2024, Daniel Litt~\cite{Litt} asked the following question (see also Cheplyaka~\cite{Cheplyaka}): If you flip a coin $100$ times, will HT (a heads followed by a tails) occur more often than HH (two heads in a row) or the other way around?
Ekhad and Zeilberger~\cite{EkhadZeilberger} studied this and some more general questions by symbolic computation.
Segert~\cite{Segert} proved that it is more likely for HT to outnumber HH when a coin is flipped at least three times and provided certain asymptotic results.
Basdevant, H\'enard, Maurel-S\'egala, and Singh~\cite{BHMS} generalized this game by allowing two players choosing two different strings of heads and tails with the same length.

While all the above research work and other related online discussions occurred in 2024, Simonson discussed in a book published in 2022~\cite[Ch.~2]{Simonson} a somewhat similar (but different) game in which a carnie offers \$5 to you while in a carnival and asks you to pay \$$n$ back, where $n$ is the number of times you flip a coin until you have two consecutive heads. 
Let $E$ be the expected value of coin flips to end this game.
If the first flip gives a tail, then the expected value of flips to get two heads in a row will be $1+E$.
If the first flip gives a heads and the next is a tails, then the expected value of flips will be $2+E$.
If the first two flips both give a heads then the expected flips is $2$.
It follows that
$E = \frac12(1+E) + \frac14(2+E) + \frac 2 4$.
This implies $E=6$, so in average you will lose \$1 each time when you play this game.
 
There is another way to compute $E$.
Let $E_n$ be the number of outcomes from flipping a coin $n$ times such that the game will end right after $n$ flips.
This turns out to be the Fibonacci number $F_{n-1}$ since $E_n=n-1$ for $n=1,2$ and for $n\ge3$, if the first flip is a heads, then the next flip must be a tails, so by induction, $E_n = F_{n-3} + F_{n-2} = F_{n-1}$.
It follows that the expected value of $n$ is $\sum_{n=1}^\infty n F_{n-1}/2^n$, which must equal $6$.
This can also be proved by standard generating function technique (see, e.g., Vajda~\cite{Vajda}), although Simonson did not address that, perhaps due to the scope of his book~\cite{Simonson}.
More generally, Benjamin, Neer, Otero, and Sellers~\cite{ProbFibSum} obtained some interesting Fibonacci sums by computing the $n$th power of the expected number of coin flips to obtain $\HH\HH$.

Simonson~\cite[Ch.~2]{Simonson} also mentioned that, since you will lose money eventually by playing the first game a number of times, what if you are offered to switch roles with the carnie but using HT instead of HH to end the game?
Similarly as the above, one can show that the expected number of flips to end this game is $E=4$, so you should not get tricked again by the carnie.
%We will show that a summation identity involving a variation of the Fibonacci number can be derived from this game.
Interestingly, as mentioned by Simonson~\cite[Ch.~2]{Simonson}, although the expected number of coin flips to get HH is more than the expected number of coin flips to get HT (and thus it is more likely for HT to outnumber HH in a long run), it is equally likely to get HT or HH when either occurs, since it is a $1/2$ chance to get either a heads or a tails after a heads.

In this paper, we study a more general coin flip game which will be terminated right after the first occurrence of a given finite string $S$ of heads and tails.
We call $S$ the \emph{ending string} of this game and let $E(S)$ denote the expected number of flips to end the game.
For any integers $k,\ell\ge1$, we show that $E(\HH^k) = 2^{k+1}-2$ and
$E(\HH^k\TT^\ell)= 2^{k+\ell}$.
This result implies not only that $E(\HH\HH)=6$ and $E(\HH\TT)=4$ as mentioned earlier but also the following identities:
\[ \sum_{n=0}^\infty \frac{n F_{n-1}^k}{2^n} = 2^{k+1}-2 \quad\text{and}\quad
\sum_{n=0}^\infty \frac{n \overline F_{n-2}^k}{2^n} = 2^{k+1}. \]
Here $F_n^k$ is the \emph{Fibonacci number of order $k$} and $\overline F_n^k$ is a variation of $F_n^k$ (see Section~\ref{sec:id}).
We also show that $E(\HH^k\TT^\ell\HH^m) = 2^{k+\ell+m} + 2^{\min\{k,m\}+1} -2$ for any integers $k,\ell,m\ge1$ and derive a summation identity involving a two-parameter generalization of both $F_n^k$ and $\overline F_n^k$.
Moreover, we prove that for any integer $k,\ell,m,d\ge1$,
\[ E(\HH^k\TT^\ell\HH^m\TT^d) = \begin{cases}
2^{k+\ell+m+d}, & \text{if } m<k \text{ or } d>\ell; \\
2^{k+\ell+m+d} + 2^{k+d}, & \text{if } m\ge k \text{ and } d\le\ell.
\end{cases} \]
This leads to a summation identity involving a different two-parameter generalization of the Fibonacci numbers.

Thanks to the symmetry between heads and tails, we have settled all the cases when the ending string of the coin flip game has at most four maximal runs of heads or tails, although it would need a different type of symmetry argument (see Section~\ref{sec:questions}) to explain the equalities $E(\HH^k\TT^\ell) = E(\TT^\ell\HH^k)$, $E(\HH^k\TT^\ell\HH^m) = E(\HH^m\TT^\ell\HH^k)$, and $E(\HH^k\TT^\ell\HH^m\TT^d) = E(\HH^d\TT^m\HH^\ell\TT^k)$, all implied by our results mentioned above.
Lastly, for ending strings alternating between heads and tails, we obtain 
\[
E((\HH\TT)^k) = \sum_{i=1}^k 2^{2i} = \frac{2^{2k+2}-4}{3}
\quad\text{and}\quad
E((\HH\TT)^{k-1}\HH) = \sum_{i=1}^k 2^{2i-1} = \frac{2^{2k+1}-2}{3}
\]
and derive some summation identities as well.

This paper is structured as follows. 
In Section~\ref{sec:main} we determine $E(S)$ when $S$ has at most four maximal runs of heads or tails or alternates between heads and tails.
In Section~\ref{sec:id} we derive summation identities from our results in the last section.
Finally, we ask some questions for future research in Section~\ref{sec:questions}.

\section{Expected number of coin flips}\label{sec:main}

Let $S$ be the ending string of the coin flip game described in Section~\ref{sec:intro}.
To study the expected number $E(S)$ of flips to end this game, we need the identities below, whose proofs are included to be complete.

\begin{lemma}\label{lem:id}
For any integer $k\ge1$, 
we have $\displaystyle \sum_{i=1}^k \frac{1}{2^i} = 1-\frac{1}{2^k}$ and 
$\displaystyle \sum_{i=1}^k \frac{i}{2^i} = 2-\frac{k+2}{2^k}$.
\end{lemma}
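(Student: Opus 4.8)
The plan is to prove both identities by induction on $k$, which is the cleanest route and matches the elementary level of the surrounding exposition. For the first identity, $\sum_{i=1}^k 2^{-i} = 1 - 2^{-k}$, the base case $k=1$ reads $1/2 = 1 - 1/2$, which holds. For the inductive step, assuming the formula for $k$, I would add the next term: $\sum_{i=1}^{k+1} 2^{-i} = (1 - 2^{-k}) + 2^{-(k+1)} = 1 - 2^{-(k+1)}$, since $2^{-k} - 2^{-(k+1)} = 2^{-(k+1)}$. That closes the first part.

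For the second identity, $\sum_{i=1}^k i\,2^{-i} = 2 - (k+2)2^{-k}$, the base case $k=1$ gives $1/2 = 2 - 3/2$, which checks out. For the inductive step, assume the identity for $k$ and compute
\[
\sum_{i=1}^{k+1} \frac{i}{2^i} = \Bigl(2 - \frac{k+2}{2^k}\Bigr) + \frac{k+1}{2^{k+1}} = 2 - \frac{2(k+2) - (k+1)}{2^{k+1}} = 2 - \frac{k+3}{2^{k+1}},
\]
which is exactly the claimed formula with $k$ replaced by $k+1$. So the induction goes through with only a one-line algebraic simplification at each stage.

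There is really no serious obstacle here; the only thing to be careful about is bookkeeping with the powers of two in the inductive step of the second sum (getting $2(k+2) - (k+1) = k+3$ right). An alternative I would mention is the generating-function / telescoping approach: the first identity is the partial sum of a geometric series, and the second follows by differentiating $\sum_{i\ge 0} x^i = 1/(1-x)$ and evaluating a partial sum at $x = 1/2$, or equivalently by writing $i\,2^{-i}$ as a telescoping difference. But since the statement only asserts the finite-sum formulas and the paper flags that the proofs are included "to be complete," the short induction is the most economical presentation, and that is what I would write out in full.
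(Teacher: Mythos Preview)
Your induction proof is correct, but it is not the route the paper takes. The paper handles the first identity by substituting $x=\tfrac12$ into the closed form $\sum_{i=0}^k x^i = (1-x^{k+1})/(1-x)$, and then obtains the second by differentiating this formula, multiplying by $x$, and again setting $x=\tfrac12$. In other words, the ``alternative'' you mention at the end is exactly what the paper does. Your inductive argument is more self-contained (no calculus, no prior formula assumed), while the paper's approach is a two-line appeal to a standard identity and its derivative; either is adequate for a lemma flagged as ``included to be complete.''
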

\begin{proof}
Substituting $x=\frac12$ into the well-known formula 
$\sum_{i=0}^k x^i = (1-x^{k+1})/(1-x)$
gives the first identity.
Taking the derivative of both sides of the above formula,
%\[ \sum_{i=1}^k ix^{i-1} = \frac{1-x^{k+1}-(k+1)x^k(1-x)}{(1-x)^2}. \]
then multiplying both sides by $x$ and substituting in $x=\frac12$ we obtain the second desired identity.
\end{proof}

We first determine $E(S)$ when $S$ consists of only one or two maximal runs of heads or tails.
By the symmetry between heads and tails, we may assume that $S$ begins with a heads.

\begin{theorem}\label{thm1}
For any integers $k,\ell\ge1$, we have $E(\HH^k) = 2^{k+1}-2$ and $E(\HH^k\TT^\ell) = 2^{k+\ell}$.
\end{theorem}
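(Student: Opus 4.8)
The plan is to model each game as an absorbing Markov chain and use first-step analysis, reducing everything to linear recursions whose closed forms are supplied by Lemma~\ref{lem:id}.

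For $E(\HH^k)$, I would condition on when the first tails appears. With probability $2^{-k}$ the first $k$ flips are all heads and the game ends after $k$ flips; otherwise, for each $j=1,\dots,k$, with probability $2^{-j}$ the first tails occurs on flip $j$, and from that point on the situation is identical to the start, so the conditional expectation of the total number of flips is $j+E(\HH^k)$. This yields
\[
E(\HH^k) = k\cdot 2^{-k} + \sum_{j=1}^{k} 2^{-j}\bigl(j+E(\HH^k)\bigr).
\]
Both sums on the right are evaluated by Lemma~\ref{lem:id} (this is exactly why both identities there are recorded), and solving the resulting linear equation for $E(\HH^k)$ gives $2^{k+1}-2$.

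For $E(\HH^k\TT^\ell)$ the key structural fact is that $\HH^k\TT^\ell$ has no nontrivial self-overlap: no proper nonempty prefix equals a proper suffix, since a prefix has the form $\HH^i$ or $\HH^k\TT^i$ while a suffix has the form $\TT^i$ or $\HH^i\TT^\ell$, and for $k,\ell\ge1$ these never coincide except trivially. Consequently the natural state of the game --- the length of the longest suffix of the flips so far that is a prefix of $\HH^k\TT^\ell$ --- behaves simply: from a ``heads'' state $\HH^i$ with $i<k$ a tails drops us back to the empty state, and from a ``tails'' state $\HH^k\TT^i$ with $1\le i<\ell$ a heads drops us to the single-letter state $\HH$. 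I would therefore split the game into two phases. First, reach the state $\HH^k$; by the observation above this is the first passage to that state, which happens exactly when $\HH^k$ first occurs, hence after $E(\HH^k)$ flips on average. Second, starting from the state $\HH^k$, wait to complete the $\TT^\ell$ suffix; write $A$ for the expected number of flips in this phase, so that $E(\HH^k\TT^\ell)=E(\HH^k)+A$ by the memorylessness of the process. First-step analysis on the tails states expresses $A$ through a recursion that also involves the expected number of flips needed to climb from state $\HH$ back to state $\HH^k$, and the latter equals $E(\HH^k)-2$ by a one-line first-step computation. Assembling these pieces gives a single linear equation for $A$, solved again with Lemma~\ref{lem:id}, yielding $A=2^{k+\ell}-2^{k+1}+2$ and hence $E(\HH^k\TT^\ell)=2^{k+\ell}$. (Equivalently, one could set up the full chain on the $k+\ell+1$ prefix states at once; the two-phase split is just bookkeeping that reuses the first part of the theorem.)

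The main obstacle is precisely this bookkeeping for the second string: one must correctly identify, for every partial match of $\HH^k\TT^\ell$, which state a ``wrong'' flip sends the process to, and keep the coupled recursions over the heads phase and the tails phase organized enough that they collapse to an equation Lemma~\ref{lem:id} can close. The no-self-overlap property is what keeps the fall-back states simple; without it they would branch in a more complicated way. I would also remark in passing that the game terminates almost surely with finite expectation, so that all of the above equations are legitimate.
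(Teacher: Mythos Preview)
Your argument for $E(\HH^k)$ is exactly the paper's first proof: condition on the location of the first tails and solve with Lemma~\ref{lem:id}. For $E(\HH^k\TT^\ell)$ your plan is correct but organized differently from either of the paper's two proofs. The paper's first proof computes $E(S\mid\HH)$ by conditioning on the length $i\ge1$ of the opening run of heads (an infinite sum over $i\ge k$) and then on the following run of tails; the paper's alternative proof uses Lemma~\ref{lem:RS} with the prefix $\HH^k\TT^{\ell-1}$ to obtain the recursion $E(\HH^k\TT^\ell)=2\,E(\HH^k\TT^{\ell-1})$ and finishes by induction on $\ell$. Your two-phase split is essentially Lemma~\ref{lem:RS} with the prefix $\HH^k$ instead, and your observation that the expected time from state $\HH$ back to state $\HH^k$ equals $E(\HH^k)-2$ is the content of the $i=1$ case of Lemma~\ref{lem:HT}. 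The no-self-overlap remark you use to pin down the fallback states is implicit in the paper's arguments but not singled out; making it explicit is a nice clarifying touch. All three routes lead cleanly to $2^{k+\ell}$, with the paper's inductive proof being the shortest and your direct solve of the tails-phase recursion being a reasonable middle ground that avoids both the infinite sum and the induction.
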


\begin{proof}
First let $E=E(S)$, where $S=\HH^k$. 
If the $i$th flip gives the first tails, then we must have $i\le k$, and the expected number of flips to end the game will be $E(S|\HH^{i-1}\TT) = E+i$ in this case; otherwise the first $k$ flips are all heads, and $E(S|\HH^k)=k$.
It follows that
\[ E = \sum_{i=1}^k \frac{E+i}{2^i} + \frac{k}{2^k}
= E -\frac{E}{2^k} + 2-\frac{k+2}{2^k} + \frac{k}{2^k}, 
\]
where the second equality holds by Lemma~\ref{lem:id}.
This implies that $E = 2^{k+1}-2$.

Now let $E=E(S)$, where $S=\HH^k\TT^\ell$.
Then $E=\frac12 E(S|\HH) + \frac12 E(S|\TT)$ and $E(S|\TT) = E+1$, so $E(S|\HH) = E-1$.
Suppose the first flip gives a heads.
Then the $(i+1)$th flip gives the first tails for some integer $i\ge1$.
If $i<k$ then $E(S|\HH^i\TT) = E+i+1$.
Assume $i\ge k$ below.
After the first $i$ flips (all heads) there will be a maximal run of $j$ tails for some integer $j\in\{1,2,\ldots,\ell\}$.
If $j<\ell$ then the next flip is a heads and $E(S|\HH^i\TT^j\HH) = i+j+E(S|\HH) = E+i+j-1$.
If $j=\ell$ then the game ends and $E(S|\HH^i\TT^\ell)=i+\ell$.
Thus
\begin{align*}
\frac{E-1}{2} &= \sum_{i=1}^{k-1} \frac{E+i+1}{2^{i+1}} + 
\sum_{i=k}^\infty \left( \sum_{j=1}^{\ell-1} \frac{E+i+j-1}{2^{i+j+1}} + \frac{i+\ell}{2^{i+\ell}} \right) \\
&= \frac{E}{2} - \frac{E}{2^k} + \frac{3}{2} - \frac{k+2}{2^{k}} + 
\sum_{i=k}^\infty \left( \frac{1}{2^i} \left( \frac{E}{2} - \frac{E}{2^\ell} \right) 
+ \frac{i+1}{2^{i+1}} - \frac{i+\ell}{2^{i+\ell}} + \frac{i+\ell}{2^{i+\ell}} \right) \\
&= \frac{E}{2} - \frac{E}{2^k} + \frac{3}{2} - \frac{k+2}{2^{k}} + \frac{E}{2^k} - \frac{E}{2^{k+\ell-1}} + \frac{k+2}{2^{k}}.
\end{align*}
Here the second and third equalities hold by Lemma~\ref{lem:id}.
It follows that $E = 2^{k+\ell}$.
\end{proof}

To tackle the cases when $S$ consists of more than two maximal runs of heads or tails, we define $E(S|R)$ to be the expected number of flips to obtain $S$ given that the first $r$ flips produce $R$, where $R$ is a string of heads and tails with length $r$, and develop some lemmas.

\begin{lemma}\label{lem:RS}
If the ending string $S$ of the coin flip game begins with a string $R$ of length $r$, then $E(S) = E(R) + E(S|R) - r$.
\end{lemma}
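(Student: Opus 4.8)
The plan is to prove this by a first-passage decomposition together with the memorylessness of the coin flips. Let $T_R$ be the random number of flips until $R$ first appears and $T_S$ the random number of flips until $S$ first appears, so that $E(R) = \mathbb{E}[T_R]$ and $E(S) = \mathbb{E}[T_S]$. Since $R$ is a prefix of $S$, every occurrence of $S$ begins with a copy of $R$, so $R$ has occurred by the time $S$ does; hence $T_R \le T_S$ almost surely, and I may write $T_S = T_R + (T_S - T_R)$. Taking expectations, it remains to show $\mathbb{E}[T_S - T_R] = E(S\mid R) - r$.

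For that I would track the game through its pattern-matching automaton for $S$, whose state after any number of flips is the longest suffix of the flip string produced so far that is also a prefix of $S$; the number of additional flips needed to reach $S$ is a function of this state and of the future flips alone (which are i.i.d.\ fair and independent of the past). The crucial observation is that at time $T_R$ the state equals $R$ exactly: it is at least $R$ since $R$ has just appeared as a suffix and is a prefix of $S$, and it cannot be any longer, because a strictly longer prefix of $S$ occurring as a suffix at time $T_R$ would contain (in its first $r$ letters) an occurrence of $R$ finishing strictly before $T_R$, contradicting the minimality of $T_R$. Likewise, in the game computing $E(S\mid R)$ the state after the first $r$ prescribed flips is trivially $R$, and $S$ itself cannot have been completed during those flips, since $|S| > r$ whenever $R$ is a proper prefix.

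Therefore, conditioned on $\mathcal{F}_{T_R}$, the number $T_S - T_R$ of remaining flips has exactly the same law as the number of post-initial flips in the $E(S\mid R)$ game, namely $E(S\mid R) - r$ in expectation; and since this count depends only on the state $R$ and the future flips, it is in fact independent of $\mathcal{F}_{T_R}$. Taking expectations gives $E(S) = \mathbb{E}[T_R] + \bigl(E(S\mid R) - r\bigr) = E(R) + E(S\mid R) - r$, the degenerate case $R = S$ reducing to $E(S) = E(S) + r - r$. I expect the one genuinely delicate point to be the second paragraph: justifying that the automaton state at the first occurrence of $R$ is $R$ and not some longer prefix of $S$, and phrasing the memorylessness cleanly enough that substituting $E(S\mid R) - r$ for the post-$T_R$ expectation is fully rigorous; the rest is bookkeeping.
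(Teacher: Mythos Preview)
Your proof is correct and follows essentially the same first-passage decomposition as the paper's own argument: wait for the first occurrence of $R$, then note that the remaining game is distributed like the game conditioned on starting with $R$, costing $E(S\mid R)-r$ additional flips in expectation. The paper states the key step informally (``all the flips before that will not affect when the game ends''), whereas you supply the rigorous justification via the automaton-state observation that at time $T_R$ no prefix of $S$ longer than $R$ can be pending; this is a welcome addition but not a different approach.
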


\begin{proof}
If $S$ occurs then $R$ must occur first.
It takes $E(R)$ flips in average to obtain the first occurrence of $R$, and all the flips before that will not affect when the game ends.
It will then take $E(S|R)-r$ extra flips in average to obtain $S$.
Thus $E(S) = E(R) + E(S|R) - r$.
\end{proof}

\begin{lemma}\label{lem:HT}
The following holds for $E(S)$, where $S$ begins with a maximal run of $k\ge1$ heads.
\begin{itemize}
\item
If $i<k$ then $E(S|\HH^i\TT) = E+i+1$ and $E(S|\HH^i) = E+i+2- 2^{i+1}$.
\item
If $i\ge k$ then 
$E(S|\HH^i\TT) = E+i+1-2^{k+1}$ and $E(S|\HH^i) = E+i+2-2^{k+1}$.
\end{itemize}
\end{lemma}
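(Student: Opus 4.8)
The plan is to derive all four formulas from a single renewal identity that generalizes Lemma~\ref{lem:RS}: for \emph{any} finite string $R$ of heads and tails, if $R'$ denotes the longest prefix of $S$ that is also a suffix of $R$, then
\[ E(S|R) = |R| + E(S) - E(R'), \]
where $E(R')=0$ if $R'$ is the empty string. I would prove this by the reasoning behind Lemma~\ref{lem:RS}: conditioned on the first $|R|$ flips producing $R$, a later copy of $S$ can overlap the flips already seen only through a suffix of $R$ that is a prefix of $S$, the longest such being $R'$, so from this point the conditional expected number of additional flips until $S$ appears equals $E(S|R')-|R'|$; since $R'$ is a prefix of $S$, Lemma~\ref{lem:RS} gives $E(S|R')-|R'|=E(S)-E(R')$, and adding the $|R|$ flips already spent yields the identity. (If one prefers to avoid prefix/suffix language, it is enough to establish directly the two instances used below: appending extra heads before the first tails changes the relevant conditional expectation by exactly the number of extra flips, and $\HH^i\TT$ with $i<k$ retains no progress at all, i.e.\ its $R'$ is empty.)

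Granting the identity, each of the four formulas follows once $R'$ is identified, and this is precisely where maximality of the initial run of $k$ heads is used. Throughout write $E=E(S)$ and recall from Theorem~\ref{thm1} that $E(\HH^i)=2^{i+1}-2$ and $E(\HH^k\TT)=2^{k+1}$ (the case $\ell=1$). If $i<k$, then $\HH^i$ is a prefix of $S$, so $R'=\HH^i$ and $E(S|\HH^i)=i+E-E(\HH^i)=E+i+2-2^{i+1}$, while no nonempty suffix of $\HH^i\TT$ is a prefix of $S$ --- such a suffix is either $\TT$, impossible since $S$ starts with a heads, or $\HH^j\TT$ with $j\le i<k$, which would force the initial run of heads in $S$ to have length $j<k$ --- so $R'$ is empty and $E(S|\HH^i\TT)=(i+1)+E$. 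If $i\ge k$, then among the suffixes $\HH^0,\HH^1,\dots,\HH^i$ of $\HH^i$ the longest that is a prefix of $S$ is $\HH^k$, so $E(S|\HH^i)=i+E-E(\HH^k)=E+i+2-2^{k+1}$; and, assuming (as in every application of this lemma) that $S$ begins with $\HH^k\TT$, the longest suffix of $\HH^i\TT$ that is a prefix of $S$ is $\HH^k\TT$, so $E(S|\HH^i\TT)=(i+1)+E-E(\HH^k\TT)=E+i+1-2^{k+1}$.

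I expect the only step that genuinely needs care is the renewal identity, and in particular the case in which $R'$ is empty: one must justify that, conditioned on the first $i+1$ flips producing $\HH^i\TT$ with $i<k$, the expected number of further flips until $S$ first appears is exactly $E(S)$. This is intuitively clear, but the formal point is the observation above that no nonempty prefix of $S$ is a suffix of $\HH^i\TT$, so the first occurrence of $S$ in the whole flip sequence must lie entirely among the flips after $\HH^i\TT$ and is positioned there just as it would be in a fresh game. The remaining work is the elementary combinatorics of the prefix/suffix overlaps above together with the substitutions from Theorem~\ref{thm1}.
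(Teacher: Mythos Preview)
Your argument is correct. The renewal identity $E(S|R)=|R|+E(S)-E(R')$, with $R'$ the longest prefix of $S$ that is a suffix of $R$, is exactly the expected-hitting-time formula coming from the Knuth--Morris--Pratt automaton state, and your sketch of why the state after $R$ coincides with the state after $R'$ (every shorter suffix of $R$ that is a prefix of $S$ is already a suffix of $R'$) is the right justification. The four substitutions then go through, using $E(\HH^i)=2^{i+1}-2$ and $E(\HH^k\TT)=2^{k+1}$ from Theorem~\ref{thm1}. Your explicit caveat that the $i\ge k$, $R=\HH^i\TT$ case needs $S$ to begin with $\HH^k\TT$ (not $S=\HH^k$) is appropriate; the paper's own proof tacitly makes the same assumption.

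The paper argues differently and more locally. It gets $E(S|\HH^i)$ for $i\le k$ straight from Lemma~\ref{lem:RS} (the special case of your identity where $R$ is itself a prefix of $S$), handles $i>k$ by the one-line observation $E(S|\HH^i)=(i-k)+E(S|\HH^k)$, and then recovers $E(S|\HH^i\TT)$ for $i\ge k$ from the conditioning relation $E(S|\HH^i\TT)=2E(S|\HH^i)-E(S|\HH^{i+1})$. So the paper never needs the general renewal identity or the value $E(\HH^k\TT)$; it only uses $E(\HH^i)$. Your route is more uniform and immediately reusable for other strings $R$, at the cost of having to justify the KMP-state claim once; the paper's route is slightly more elementary but case-by-case.
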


\begin{proof}
Let $E=E(S)$.
For $i<k$, if the first $i+1$ flips give $\HH^i\TT$ then the game is reset, so $E(S|\HH^i\TT) = E+i+1$. 
%We can then show by induction on $i$ that $E(S|\HH^i) = E(S)+i+2- 2^{i+1}$ for $i\le k$.
%If $i=0$ then both sides equal $E$.
%If $1\le i\le k$ then $E(S|\HH^{i-1}) = \frac12 E(S|\HH^{i}) + \frac12 E(S|\HH^{i-1}\TT)$ implies
%\[ E(S|\HH^i) = 2E(S|\HH^{i-1}) - E(S|\HH^{i-1}\TT) 
%= 2(E+i+1-2^{i})-(E+i) = E+i+2-2^{i+1}. \]
For $i\le k$, Lemma~\ref{lem:RS} implies that 
\[ E(S|\HH^i) = E + i - E(\HH^i) = E + i + 2 - 2^{i+1}, \]
which can also be proved by using $E(S|\HH^i) = 2E(S|\HH^{i-1}) - E(S|\HH^{i-1}\TT)$ and induction on $i$.
For $i>k$ we have 
\[ E(S|\HH^i) = i-k+E(S|\HH^k) = E+i+2-2^{k+1}. \]
Then for $i\ge k$ we have
\[ E(S|\HH^i\TT) = 2E(S|\HH^i)-E(S|\HH^{i+1}) 
= 2(E+i+2-2^{k+1}) - (E+i+3-2^{k+1}) = E+i+1-2^{k+1}. 
\qedhere \]
\end{proof}

Before any new result, we use the above lemmas to give a more concise proof for Theorem~\ref{thm1}.

\begin{proof}[Another proof for Theorem~\ref{thm1}]
First let $E=E(S)$, where $S=\HH^k$ for some $k\ge1$.
By Lemma~\ref{lem:HT}, $E(S|\HH^{k-1}) = E+k+1-2^k$.
Also, $E(S|\HH^{k-1}) = \frac12(E+k) + \frac{k}{2} = \frac12 E + k$.
Thus $E = 2^{k+2}-2$.

Now let $E=E(S)$, where $S=\HH^k\TT^\ell$ for some integers $k,\ell\ge1$.
If $\ell=1$ then by Lemma~\ref{lem:HT}, $k+1=E(S|\HH^k\TT) = E+k+1-2^{k+1}$, so $E=2^{k+1}$.
Assume $\ell\ge2$ below. 
By Lemma~\ref{lem:RS}, $E = E(\HH^k\TT^{\ell-1}) + E(S|\HH^k\TT^{\ell-1}) -k-\ell+1$. 
Since $E(S|\HH) = E-1$ by Lemma~\ref{lem:HT}, we have $E(S|\HH^k\TT^{\ell-1}) = \frac12(E+k+\ell-2) + \frac12(k+\ell)$.
Thus $E=2E(\HH^k\TT^{\ell-1})$.
By induction, $E=2^{k+\ell}$.
\end{proof}

Similarly to Theorem~\ref{thm1}, we now determine $E(S)$ when $S$ consists of three maximal runs of heads or tails and provide two proofs for our result.

\begin{theorem}\label{thm2}
For $k,\ell,m\ge1$ we have $E(\HH^k\TT^\ell\HH^m) = 2^{k+\ell+m} + 2^{\min\{k,m\}+1} -2$.
\end{theorem}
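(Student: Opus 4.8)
The plan is to prove this by induction on $m$, in the spirit of the second proof of Theorem~\ref{thm1}: I would use Lemma~\ref{lem:RS} to peel off the prefix $\HH^k\TT^\ell\HH^{m-1}$ (or $\HH^k\TT^\ell$ when $m=1$), and Lemma~\ref{lem:HT} to evaluate the one relevant ``reset'' state. Write $E=E(S)$ with $S=\HH^k\TT^\ell\HH^m$. For the base case $m=1$, apply Lemma~\ref{lem:RS} with $R=\HH^k\TT^\ell$ and use $E(\HH^k\TT^\ell)=2^{k+\ell}$ from Theorem~\ref{thm1}, so that $E = 2^{k+\ell}+E(S|\HH^k\TT^\ell)-(k+\ell)$. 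From the state $\HH^k\TT^\ell$ one more flip ends the game (if heads) after $k+\ell+1$ flips, or produces $\HH^k\TT^{\ell+1}$ (if tails), which has no nonempty suffix that is a prefix of $S$ — since $S$ begins with a heads and its interior run of tails has length only $\ell$ — so the game is fully reset after $k+\ell+1$ flips. Hence $E(S|\HH^k\TT^\ell)=\tfrac12(k+\ell+1)+\tfrac12(k+\ell+1+E)$, which solves to $E=2^{k+\ell+1}+2$, agreeing with the claim since $\min\{k,1\}=1$.

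For the inductive step ($m\ge2$) I would apply Lemma~\ref{lem:RS} with $R=\HH^k\TT^\ell\HH^{m-1}$ together with the inductive hypothesis for $E(R)$. From the state $R$ a heads ends the game after $k+\ell+m$ flips, while a tails produces $\HH^k\TT^\ell\HH^{m-1}\TT$, whose longest suffix that is a prefix of $S$ is $\HH^k\TT$ when $m-1\ge k$ and is empty when $m-1<k$. This dichotomy is exactly the source of the $\min\{k,m\}$ in the statement, so I would split into the cases $m>k$ and $m\le k$. When $m>k$, the inductive hypothesis gives $E(R)=2^{k+\ell+m-1}+2^{k+1}-2$, and Lemma~\ref{lem:HT} (applied with $i=k$) gives $E(S|\HH^k\TT)=E+k+1-2^{k+1}$; adjusting for the $k+\ell+m$ flips already made yields $E(S|R)=k+\ell+m+\tfrac12 E-2^k$, and then Lemma~\ref{lem:RS} becomes a linear equation with solution $E=2^{k+\ell+m}+2^{k+1}-2$. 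When $m\le k$, instead $E(R)=2^{k+\ell+m-1}+2^m-2$ and the reset is complete, so $E(S|R)=k+\ell+m+\tfrac12 E$ and the equation solves to $E=2^{k+\ell+m}+2^{m+1}-2$. In both cases the extra term is $2^{\min\{k,m\}+1}$, as claimed.

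I expect the main obstacle to be the bookkeeping around that reset state: correctly identifying the longest suffix of $\HH^k\TT^\ell\HH^{m-1}\TT$ that is a prefix of $S$ (in particular that nothing longer than $\HH^k\TT$ can match, because the initial run of heads in $S$ has length exactly $k$), and then converting ``being in that state after $k+\ell+m$ flips'' into the right affine function of $E$ via Lemmas~\ref{lem:RS} and~\ref{lem:HT}. As an alternative, yielding a second (more computational) proof, one can bypass the induction and instead condition on the length $i$ of the maximal run of heads immediately following the first occurrence of $\HH^k\TT^\ell$: for $0\le i<m$ a tails follows and Lemma~\ref{lem:HT} evaluates $E(S|\HH^k\TT^\ell\HH^i\TT)$, while $i=m$ ends the game; summing the resulting geometric-type series with Lemma~\ref{lem:id}, and splitting the sum at $i=k$, reproduces the two cases $m\le k$ and $m>k$ and the same formula.
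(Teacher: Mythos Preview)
Your proposal is correct and follows essentially the same approach as the paper's own Proof~2: peel off the last $\HH$ using Lemma~\ref{lem:RS} with $R=\HH^k\TT^\ell\HH^{m-1}$, identify the reset state after $\HH^k\TT^\ell\HH^{m-1}\TT$ via Lemma~\ref{lem:HT}, split into $m\le k$ versus $m>k$, and iterate down to $E(\HH^k\TT^\ell)=2^{k+\ell}$. The only cosmetic difference is that the paper absorbs your base case $m=1$ into the same recursion (treating $\HH^k\TT^\ell\HH^0=\HH^k\TT^\ell$ as the starting point), and the alternative you sketch at the end is exactly the paper's Proof~1.
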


\begin{proof}[Proof 1]
If $S=\HH^k\TT^\ell\HH^m$ occurs, then $S'=\HH^k\TT^\ell$ must occur first.
It takes $E(S')$ flips in average to obtain $S'$.
If the next flip is tails, then the game is reset and it will still take in average $E=E(S)$ more flips after that tails to obtain $S$.
Otherwise the first occurrence of $S'$ is followed by a maximal run of $i$ heads for some integer $i\in\{1,2,\ldots,m\}$.
If $i=m$ then the game ends right away.
Assume $i<m$ below, so the first occurrence of $S'=\HH^k\TT^\ell$ will not affect when the game ends.
If $i<k$ then the game is reset by $\HH^i\TT$ and it will still take extra $E$ flips in average to obtain $S$.
If $k\le i < m$ then we have $E(\HH^i\TT) = E+i+1-2^{k+1}$ by Lemma~\ref{lem:HT}.
Thus
\begin{align*}
E &= E(S') + \frac{E+1}{2} + \sum_{i=1}^{\min\{k,m\}-1} \frac{E+i+1}{2^{i+1}} + \sum_{i=\min\{k,m\}}^{m-1} \frac{E+i+1-2^{k+1}}{2^{i+1}} + \frac{m}{2^m} \\
&= E(S') + \frac{E+1}{2} + \frac{E}{2} - \frac{E}{2^m} + \frac{3}{2} - \frac{m+2}{2^m} + \frac{2^{k+1}}{2^{m}} - \frac{2^{k+1}}{2^{\min\{k,m\}}} + \frac{m}{2^m}.
%&= E(S') + E - \frac{E}{2^m} + 2 + \frac{2^{k+1}}{2^{m}} - \frac{2^{k+1}}{2^{\min\{k,m\}}} - \frac{2}{2^m}. 
\end{align*}
Here the second equality follows from Lemma~\ref{lem:id}.
Then we can solve for $E$ from the above.
\end{proof}

\begin{proof}[Proof 2]
Let $S=\HH^k\TT^\ell\HH^m$.
By Lemma~\ref{lem:RS} and Lemma~\ref{lem:HT},
\begin{align*}
E &= E(\HH^k\TT^\ell\HH^{m-1}) + E(S|\HH^k\TT^\ell\HH^{m-1}) -k-\ell-m+1 \\
&= E(\HH^k\TT^\ell\HH^{m-1}) + \frac12(k+\ell+m) + \frac12 E(S|\HH^k\TT^\ell\HH^{m-1}\TT) -k-\ell-m+1.
\end{align*}

First assume $m\le k$.
Then $E(S|\HH^k\TT^\ell\HH^{m-1}\TT) = E+k+\ell+m$.
Thus $E=2E(\HH^k\TT^\ell\HH^{m-1}) + 2$.

Assume $m>k$ below.
Using $E(S|\HH^k\TT) = E+k+1-2^{k+1}$ from Lemma~\ref{lem:HT}, we obtain 
\[ E(S|\HH^k\TT^\ell\HH^{m-1}\TT) = E(S|\HH^k\TT)+\ell+m-1 
= E+k+\ell+m-2^{k+1}.
\]
It follows that $E=2E(\HH^k\TT^\ell\HH^{m-1}) + 2 - 2^{k+1}$.

Applying the above recursion to $E(\HH^k\TT^\ell)=2^{k+\ell}$ from Theorem~\ref{thm1} finishes the proof.
\end{proof}

\begin{example}
Theorem~\ref{thm1} implies $E(\HH\HH)=E(\TT\TT)=2^3-2=6$ and $E(\HH\TT)=E(\TT\HH)=2^2=4$, both mentioned in Section~\ref{sec:intro}.
Applying Theorem~\ref{thm1} and Theorem~\ref{thm2} to all ending strings of length three, we have $E(\HH\HH\HH)=E(\TT\TT\TT)=2^4-2=14$, which is much larger than $E(\HH\HH\TT) = E(\TT\TT\HH) = E(\HH\TT\TT) = E(\TT\HH\HH) =2^3=8$ (although when HHH or HHT occurs, it is a 50\% chance to get either one), and $E(\HH\TT\HH)=E(\TT\HH\TT)=2^3+2^2-2=10$ lies in between.
\end{example}

Next, we establish a formula for $E(S)$ when $S$ consists of four maximal runs of heads or tails.
We still provide two proofs, both based on the same strategy as in the second proof of Theorem~\ref{thm2}, hoping that at least of them could be helpful in future studies of more complicated cases.

\begin{theorem}\label{thm3}
Let $E=E(S)$, where $S=\HH^k\TT^\ell\HH^m\TT^d$ with $k,\ell,m,d\ge1$.
Then \[ E = \begin{cases}
2^{k+\ell+m+d}, & \text{if } m<k \text{ or } d>\ell; \\
2^{k+\ell+m+d} + 2^{k+d}, & \text{if } m\ge k \text{ and } d\le\ell.
\end{cases} \]
\end{theorem}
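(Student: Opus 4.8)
The plan is to iterate the peeling recursion from the second proof of Theorem~\ref{thm2}. Write $n = k+\ell+m+d$. For $d\ge 2$ the word $S' = \HH^k\TT^\ell\HH^m\TT^{d-1}$ is a proper prefix of $S = \HH^k\TT^\ell\HH^m\TT^d$, so Lemma~\ref{lem:RS} gives $E(S) = E(S') + E(S|S') - (n-1)$. Conditioning on the flip right after the first occurrence of $S'$, a tails completes $S$ in $n$ flips while a heads produces $W := S'\HH = \HH^k\TT^\ell\HH^m\TT^{d-1}\HH$, so $E(S|S') = \frac12 n + \frac12 E(S|W)$. Everything then comes down to the \emph{reset state} after $W$, namely the longest prefix $R$ of $S$ that is also a suffix of $W$: once $R$ is identified, $E(S|W) = |W| + E(S|R) - |R|$, and $E(S|R)$ is recovered from Lemma~\ref{lem:RS} together with the already-proved theorems.

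The crux is this overlap analysis. Since $W$ ends in $\HH$, no prefix of $S$ ending in $\TT$ --- those of the form $\HH^k\TT^j$ or $\HH^k\TT^\ell\HH^m\TT^j$ --- can be a suffix of $W$, and since $W$ has a trailing run of only one heads (as $d-1\ge1$), the only surviving candidates are $\HH$ itself and strings $\HH^k\TT^\ell\HH^j$. A run-length comparison shows these can occur only as $R=\HH$, or else $d-1=\ell$, $m\ge k$, and $R=\HH^k\TT^\ell\HH$. In the generic case $R=\HH$: Lemma~\ref{lem:HT} gives $E(S|\HH)=E-1$, hence $E(S|W)=n+E-2$, $E(S|S')=\frac12 E+n-1$, and the recursion collapses to $E(S)=2E(S')$. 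In the exceptional case $d=\ell+1$, $m\ge k$: using $E(\HH^k\TT^\ell\HH)=2^{k+\ell+1}+2$ from Theorem~\ref{thm2} and Lemma~\ref{lem:RS} one gets $E(S|R)=E+k+\ell-1-2^{k+\ell+1}$, so $E(S|W)=n+E-2-2^{k+\ell+1}$ and the recursion becomes $E(S)=2E(S')-2^{k+\ell+1}$.

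For the base case $d=1$ one peels down to $S'=\HH^k\TT^\ell\HH^m$, handled by Theorem~\ref{thm2}; now $W=\HH^k\TT^\ell\HH^{m+1}$ has a trailing run of $m+1$ heads, so $R=\HH^{\min\{k,m+1\}}$, and Lemmas~\ref{lem:RS} and~\ref{lem:HT} give $E(\HH^k\TT^\ell\HH^m\TT)=2E(\HH^k\TT^\ell\HH^m)+4-2^{\min\{k,m+1\}+1}$; substituting Theorem~\ref{thm2} collapses this to $2^{n}$ when $m<k$ and to $2^{n}+2^{k+1}$ when $m\ge k$. Finally iterate with $k,\ell,m$ fixed and $d$ growing. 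For $1\le d\le\ell$ every step doubles, so $E(S)=2^{n}$ if $m<k$ and $E(S)=2^{n}+2^{k+d}$ if $m\ge k$. The step at $d=\ell+1$ (which only subtracts when $m\ge k$) doubles and removes $2^{k+\ell+1}$, exactly cancelling the $2^{k+\ell+1}$ accumulated so far and returning $E(S)=2^{n}$; every later step merely doubles, keeping $E(S)=2^{n}$ for all $d>\ell$. This is precisely the claimed piecewise formula.

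I expect the border analysis of $W$ to be the real obstacle: ruling out, uniformly in the signs of $k-m$ and $\ell-(d-1)$, any self-overlap of $S$ beyond the two cases above. A cleaner but less self-contained alternative is the Conway leading-number formula $E(S)=\sum_j 2^{j}$, the sum over the lengths $j$ of borders of $S$ (prefixes that are simultaneously suffixes): one checks directly that the only borders of $\HH^k\TT^\ell\HH^m\TT^d$ are $S$ itself and, exactly when $m\ge k$ and $d\le\ell$, the word $\HH^k\TT^d$ (a prefix since $d\le\ell$, a suffix since $m\ge k$), giving the two terms $2^{n}$ and $2^{k+d}$ at once. I would present the recursive argument to stay inside the paper's framework and mention this shortcut as a remark.
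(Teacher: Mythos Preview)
Your argument is correct and coincides with the paper's second proof of Theorem~\ref{thm3}: peel off the last $\TT$, condition on the next flip, identify the reset state after $W=S'\HH$, and iterate the resulting doubling recursion (with the single subtraction at $d=\ell+1$, $m\ge k$) from the $d=1$ base case supplied by Theorem~\ref{thm2}. The one small improvement over the paper is that you obtain $E(S\mid \HH^k\TT^\ell\HH)=E+k+\ell-1-2^{k+\ell+1}$ directly from Lemma~\ref{lem:RS} and Theorem~\ref{thm2}, whereas the paper's second proof imports this value from its first proof; your border/Conway remark is a nice addition but not part of the paper's development.
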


\begin{proof}[Proof 1]
We start to compute $E$ with the help of Lemma~\ref{lem:RS}:
\begin{align*} 
E &= E(\HH^k\TT^\ell\HH^m) + E(S|\HH^k\TT^\ell\HH^m) -k-\ell-m \\ 
&= E(\HH^k\TT^\ell\HH^m) + \frac{E(S|\HH^k\TT^\ell\HH^{m+1})}2 + \frac{E(S|\HH^k\TT^\ell\HH^m\TT^d)}{2^d}
+ \sum_{i=1}^{d-1} \frac{E(S|\HH^k\TT^\ell\HH^m\TT^i\HH)}{2^{i+1}} -k-\ell-m.
\end{align*}
We have $E(\HH^k\TT^\ell\HH^m)=2^{k+\ell+m} + 2^{\min\{k,m\}+1} -2$ by Theorem~\ref{thm2}.
Lemma~\ref{lem:HT} implies
\[ E(S|\HH^k\TT^\ell\HH^{m+1}) = E(S|\HH^{m+1})+k+\ell = 
\begin{cases}
E+k+\ell+m+3-2^{m+2}, & \text{if } m+1<k; \\
E+k+\ell+m+3-2^{k+1}, & \text{if } m+1\ge k.
\end{cases} \]
It is clear that $E(S|\HH^k\TT^\ell\HH^m\TT^d)=k+\ell+m+d$.
By Lemma~\ref{lem:HT}, $E(S|\HH) = E-1$ and
\begin{align*}
E+k+1-2^{k+1} &= E(S|\HH^k\TT) = \sum_{i=1}^\infty \frac{E(S|\HH^k\TT^i\HH)}{2^i} \\
&= \sum_{i\ge 1,\ i\ne \ell} \frac{E-1+k+i}{2^i} + \frac{E(S|\HH^k\TT^\ell\HH)}{2^\ell} \\
&= E+k-1 - \frac{E+k-1}{2^\ell} + 2 - \frac{\ell}{2^\ell} + \frac{E(S|\HH^k\TT^\ell\HH)}{2^\ell}.
\end{align*}
Thus $E(S|\HH^k\TT^\ell\HH) = E + k + \ell - 1 - 2^{k+\ell+1}$.
Then for $i=1,\ldots,d-1$ we have 
\[ E(S|\HH^k\TT^\ell\HH^m\TT^i\HH) = 
\begin{cases}
E+k+\ell+m+i-1, & \text{if } m<k \text{ or } i\ne \ell; \\
E+k+\ell+m+i-1-2^{k+\ell+1}, &\text{if } m\ge k \text{ and } i=\ell.
\end{cases} \]
Therefore, we can solve for $E$ from the above and obtain the desired result.
\end{proof}

\begin{proof}[Proof 2] 
By Lemma~\ref{lem:RS}, we have
\begin{align*} 
E &= E(\HH^k\TT^\ell\HH^m\TT^{d-1}) + E(S|\HH^k\TT^\ell\HH^m\TT^{d-1}) -k-\ell-m-d+1 \\ 
%&= E(\HH^k\TT^\ell\HH^m\TT^{d-1}) + \frac12(k+\ell+m+d) + \frac12 E(S|\HH^k\TT^\ell\HH^m\TT^{d-1}\HH) -k-\ell-m-d+1 \\ 
&= E(\HH^k\TT^\ell\HH^m\TT^{d-1}) + \frac12 E(S|\HH^k\TT^\ell\HH^m\TT^{d-1}\HH) - \frac12(k+\ell+m+d) +1,
\end{align*}
where we use $E(S|\HH^k\TT^\ell\HH^m\TT^{d-1}) = \frac12(k+\ell+m+d) + \frac12 E(S|\HH^k\TT^\ell\HH^m\TT^{d-1}\HH)$.

First assume $d=1$. 
Lemma~\ref{lem:HT} implies
\[ E(S|\HH^k\TT^\ell\HH^m\TT^{d-1}\HH) = E(S|\HH^{m+1})+k+\ell = 
\begin{cases}
E+k+\ell+m+3-2^{m+2}, & \text{if } m+1<k; \\
E+k+\ell+m+3-2^{k+1}, & \text{if } m+1\ge k.
\end{cases} \]
We also have $E(\HH^k\TT^\ell\HH^m\TT^{d-1}) = E(\HH^k\TT^\ell\HH^m)= 2^{k+\ell+m} + 2^{\min\{k,m\}+1}-2$ by Theorem~\ref{thm2}.
Thus 
\[ E %= 2^{k+\ell+m+1} + 2^{\min\{k,m\}+2} - 2^{\min\{k,m+1\}+1}
= \begin{cases}
2^{k+\ell+m+1}, & \text{if } m<k; \\
2^{k+\ell+m+1} + 2^{k+1}, & \text{if } m \ge k.
\end{cases} \]

Now assume $d\ge 2$. 
We have $E(S|\HH^k\TT^\ell\HH) = E+k+\ell-1-2^{k+\ell+1}$ from the previous proof and $E(S|\HH) = E-1$ by Lemma~\ref{lem:HT}.
Thus
\[ E(S|\HH^k\TT^\ell\HH^m\TT^{d-1}\HH) = 
\begin{cases}
%E(S|\HH^k\TT^\ell\HH)+\ell+m =
E+k+\ell+m+d-2-2^{k+\ell+1}, & \text{if } m\ge k \text{ and } d=\ell+1; \\
%E(S\HH)+k+\ell+m+r-1 = 
E+k+\ell+m+d-2, & \text{if } m<k \text{ or } d\ne\ell+1.
\end{cases} \]
Then we can solve for $E$ and obtain 
\[ E = \begin{cases}
2E(S|\HH^k\TT^\ell\HH^m\TT^{d-1}) -2^{k+d}, & \text{if } m\ge k \text{ and } d=\ell+1;\\ 
2E(S|\HH^k\TT^\ell\HH^m\TT^{d-1}), & \text{if } m<k \text{ or } d\ne \ell+1.
\end{cases}\]
Applying this recursive relation to $E(\HH^k\TT^\ell\HH^m\TT)$ gives the desired result.
\end{proof}

Finally, we find $E(S)$ when $S$ is a finite string alternating between heads and tails.

\begin{theorem}\label{thm:alternate}
For any integer $k\ge1$, we have 
\[ 
E((\HH\TT)^k) = \sum_{i=1}^k 2^{2i} = \frac{2^{2k+2}-4}{3}
\quad\text{and}\quad
E((\HH\TT)^{k-1}\HH) = \sum_{i=1}^k 2^{2i-1} = \frac{2^{2k+1}-2}{3}.
\]
\end{theorem}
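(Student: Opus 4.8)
The plan is to follow the same peeling strategy used in the proofs of Theorems~\ref{thm1}--\ref{thm3}: strip the last symbol from the alternating ending string via Lemma~\ref{lem:RS}, exploit the fact that an alternating string overlaps a proper prefix of itself only in the single symbol $\HH$, and thereby reduce everything to two short recursions that telescope into the claimed closed forms.

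First I would handle $E((\HH\TT)^k)$. Writing $E = E((\HH\TT)^k)$ and applying Lemma~\ref{lem:RS} with the prefix $R = (\HH\TT)^{k-1}\HH$ of length $2k-1$, we get $E = E((\HH\TT)^{k-1}\HH) + E(S\mid(\HH\TT)^{k-1}\HH) - (2k-1)$. The term $E(S\mid(\HH\TT)^{k-1}\HH)$ is computed from the next flip: a $\TT$ ends the game after $2k$ flips, while an $\HH$ produces a string ending in $\HH\HH$, whose longest suffix that is still a prefix of $S$ is just $\HH$ (because the length-$2$ prefix of $S$ is $\HH\TT$); using $E(S\mid\HH)=E-1$ (Lemma~\ref{lem:HT} with a leading run of one heads) this gives $E(S\mid(\HH\TT)^{k-1}\HH) = 2k + \tfrac12 E - 1$, and hence $E((\HH\TT)^k) = 2\,E((\HH\TT)^{k-1}\HH)$. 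An entirely analogous computation for $E((\HH\TT)^{k-1}\HH)$, peeling off its last $\HH$ with the prefix $R=(\HH\TT)^{k-1}$ of length $2k-2$ --- where now a mismatched $\TT$ produces a string ending in $\TT\TT$ and resets the game completely --- yields $E((\HH\TT)^{k-1}\HH) = 2\,E((\HH\TT)^{k-1}) + 2$ for $k\ge2$, with base case $E(\HH)=2$ (Theorem~\ref{thm1} with $k=1$).

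Combining the two recursions gives $E((\HH\TT)^k) = 4\,E((\HH\TT)^{k-1}) + 4$ with $E(\HH\TT)=4$ from Theorem~\ref{thm1}, and then a routine induction (or the standard solution of a first-order linear recurrence) produces $E((\HH\TT)^k) = \sum_{i=1}^k 2^{2i}$ and $E((\HH\TT)^{k-1}\HH) = \tfrac12 E((\HH\TT)^k) = \sum_{i=1}^k 2^{2i-1}$; summing these geometric series (as in Lemma~\ref{lem:id}) rewrites them as $\tfrac{2^{2k+2}-4}{3}$ and $\tfrac{2^{2k+1}-2}{3}$, respectively.

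The main obstacle is the bookkeeping for the conditional expectations $E(S\mid R)$ after a mismatched flip, that is, correctly identifying the new ``progress'' --- the longest suffix of the flips so far that is still a prefix of $S$. What makes the argument short here is precisely the autocorrelation structure of alternating strings: the only nontrivial overlap of $(\HH\TT)^k$ (or of $(\HH\TT)^{k-1}\HH$) with one of its proper prefixes is the single symbol $\HH$, so any mismatch drops the game back either to the empty state or to the one-flip state $\HH$, never to anything in between. Once this is pinned down, the rest is the telescoping above, with only the small cases $k=1$ needing a separate glance.
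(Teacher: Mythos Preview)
Your proof is correct and follows essentially the same approach as the paper's: both peel off the last symbol via Lemma~\ref{lem:RS}, use $E(S\mid\HH)=E-1$ from Lemma~\ref{lem:HT} (and a full reset after $\TT\TT$) to evaluate the conditional expectation, derive the pair of recursions $E((\HH\TT)^k)=2E((\HH\TT)^{k-1}\HH)$ and $E((\HH\TT)^{k-1}\HH)=2E((\HH\TT)^{k-1})+2$, and unwind from the base case $E(\HH)=2$. The only cosmetic difference is that you combine the two recursions into $E((\HH\TT)^k)=4E((\HH\TT)^{k-1})+4$ before inducting, whereas the paper applies them alternately.
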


\begin{proof}
First let $E=E(S)$, where $S=(\HH\TT)^k$.
By Lemma~\ref{lem:RS}, we have
\begin{align*}
E &= E((\HH\TT)^{k-1}\HH) + E(S|(\HH\TT)^{k-1}\HH) - 2k + 1 \\
&= E((\HH\TT)^{k-1}\HH) + \frac12(2k-1+E(S|\HH) + 2k) - 2k + 1.
\end{align*}
Since $E(S|\HH) = E-1$ by Lemma~\ref{lem:HT}, it follows that $E=2E((\HH\TT)^{k-1}\HH)$.

Now let $E=E(S)$, where $S=(\HH\TT)^{k-1}\HH$.
Then $E=2E((\HH\TT)^{k-1})+2$ since Lemma~\ref{lem:RS} implies
\begin{align*}
E &= E((\HH\TT)^{k-1}) + E(S|(\HH\TT)^{k-1}) - 2k + 2 \\
&= E((\HH\TT)^{k-1}) + \frac12(2k-1 + E + 2k-1) - 2k + 2.
\end{align*}

Applying the above recursive relations to the initial condition $E(\HH) = 2$, which follows from the observation that $E(\HH) = \frac12 + \frac12 (E(\HH)+1)$, gives the desired result.
\end{proof}

\section{Generalizations of Fibonacci numbers}
\label{sec:id}

In this section we derive some identities from our results in Section~\ref{sec:main} using the observation that
\begin{equation}\label{eq:E_n}
E(S) = \sum_{n=0}^\infty \frac{n E_n(S)}{2^n}. 
\end{equation}
Here $E_n(S)$ denotes the number of strings consisting of a total of $n$ heads and tails such that $S$ occurs only at the end.
Also define $E_n(S|R)$ to be the number of strings consisting of a total of $n$ heads and tails with $R$ occurring at the beginning (possibly elsewhere) and $S$ occurring only at the end.
The following lemma will be useful when we compute $E_n(S)$.

\begin{lemma}\label{lem:HT'}
Suppose $k,\ell,m\ge1$ are integers.
Let $E_n=E_n(S)$.
\begin{itemize}
\item[(i)]
If $S$ begins with $\HH^k$, then 
$E_n(S|\HH^i) = E_n-E_{n-1}-\cdots-E_{n-i}$ for $i=1,\ldots,k$.
\item[(ii)]
If $S$ begins with $\HH^k\TT$, then $E_n(S|\HH^k\TT) = E_n-2E_{n-1}+E_{n-k-1}$.
\item[(iii)] 
If $S$ begins with $\HH^k\TT^\ell\HH$, then $E_n(S|\HH^k\TT^\ell\HH) = E_n-2E_{n-1}+E_{n-k-\ell}-E_{n-k-\ell-1}$.
\end{itemize}
\end{lemma}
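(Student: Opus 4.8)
The plan is to reduce all three statements to two elementary identities for the counts $E_n(S|R)$. The first is a \emph{splitting identity}: for any string $R$,
\[ E_n(S|R) = E_n(S|R\HH) + E_n(S|R\TT), \]
obtained by recording the symbol in position $|R|+1$ of a string that begins with $R$; in every case we apply it $R$ will be a proper prefix of $S$, so the identity is trivial for $n\le|R|$ (all three counts vanish, since $|R|<|S|$). The second is a \emph{reduction identity}: if $R$ has length $r\le|S|$ and $R$ is not a prefix of $S$, and $Q$ denotes the longest suffix of $R$ that is a prefix of $S$ (possibly the empty string, for which $E_n(S|Q)=E_n$), then
\[ E_n(S|R) = E_{n-r+|Q|}(S|Q). \]

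To see the reduction identity, take any string $w$ of length $n$ beginning with $R$. If an occurrence of $S$ in $w$ started at a position $q\le r-|Q|$, then (using $r\le|S|$, so that the block $w_q\cdots w_r$ is entirely covered by that occurrence) the suffix $R_q\cdots R_r$ of $R$ would be a prefix of $S$ strictly longer than $Q$, which is impossible. Hence every occurrence of $S$ in $w$ lies inside the tail $w_{r-|Q|+1}\cdots w_n$, which has length $n-r+|Q|$ and begins with $Q$, and the last occurrence of $S$ in $w$ sits at the very end of $w$ exactly when it sits at the very end of this tail. Passing from $w$ to this tail, with inverse $u\mapsto R\,u_{|Q|+1}\cdots u_{n-r+|Q|}$, is therefore a bijection between the strings counted by $E_n(S|R)$ and those counted by $E_{n-r+|Q|}(S|Q)$.

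The three parts now follow by bookkeeping. For (i), since $S$ begins with $\HH^k$, each string $\HH^{i-1}\TT$ with $1\le i\le k$ fails to be a prefix of $S$ and has no nonempty suffix that is a prefix of $S$ (it ends in a tails and has fewer than $k$ consecutive heads), so the reduction identity gives $E_n(S|\HH^{i-1}\TT)=E_{n-i}$; combining this with the splitting identity for $R=\HH^{i-1}$ yields $E_n(S|\HH^i)=E_n(S|\HH^{i-1})-E_{n-i}$, and telescoping from $E_n(S|\HH^0)=E_n$ gives the claimed formula. For (ii), the splitting identity for $R=\HH^k$ gives $E_n(S|\HH^k\TT)=E_n(S|\HH^k)-E_n(S|\HH^{k+1})$, and since $\HH^{k+1}$ is not a prefix of $S$ (which begins with $\HH^k\TT$) while its longest suffix that is a prefix of $S$ is $\HH^k$, the reduction identity gives $E_n(S|\HH^{k+1})=E_{n-1}(S|\HH^k)$; substituting the formula of (i) for both $E_n(S|\HH^k)$ and $E_{n-1}(S|\HH^k)$ and cancelling the interior terms leaves $E_n-2E_{n-1}+E_{n-k-1}$. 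For (iii), write $S$ as beginning with $\HH^k\TT^\ell\HH$ (so in particular $S$ begins with $\HH^k\TT$, and part (ii) applies). For $1\le a\le\ell-1$ the string $\HH^k\TT^a\HH$ is not a prefix of $S$ and its longest suffix that is a prefix of $S$ is the single $\HH$, so the reduction identity together with (i) gives $E_n(S|\HH^k\TT^a\HH)=E_{n-k-a}(S|\HH)=E_{n-k-a}-E_{n-k-a-1}$; feeding this into the splitting identity for $R=\HH^k\TT^a$ $(a=1,\dots,\ell-1)$ and telescoping from the value $E_n(S|\HH^k\TT)=E_n-2E_{n-1}+E_{n-k-1}$ of (ii) produces $E_n(S|\HH^k\TT^\ell)=E_n-2E_{n-1}+E_{n-k-\ell}$. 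Finally the splitting identity for $R=\HH^k\TT^\ell$ gives $E_n(S|\HH^k\TT^\ell\HH)=E_n(S|\HH^k\TT^\ell)-E_n(S|\HH^k\TT^{\ell+1})$, and since $\HH^k\TT^{\ell+1}$ is not a prefix of $S$ and has no nonempty suffix that is a prefix of $S$, the reduction identity gives $E_n(S|\HH^k\TT^{\ell+1})=E_{n-k-\ell-1}$; subtracting yields $E_n-2E_{n-1}+E_{n-k-\ell}-E_{n-k-\ell-1}$, as desired.

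The only genuine content is the reduction identity, so the thing to get right is, in each application, the identification of $Q$ --- that is, the relevant value of the Knuth--Morris--Pratt failure function of $S$ at the string $R$ under consideration --- together with a few lines of care for the boundary cases (small $n$, and $r=|S|$ in the last application), which cause no trouble because the short strings involved can never end in $S$.
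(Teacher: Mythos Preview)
Your proof is correct and follows essentially the same route as the paper: both rely on the splitting $E_n(S|R)=E_n(S|R\HH)+E_n(S|R\TT)$ together with the observation that if $R$ is not a prefix of $S$ then $E_n(S|R)=E_{n-|R|+|Q|}(S|Q)$ for $Q$ the longest suffix of $R$ that is a prefix of $S$. The only cosmetic difference is in part (iii), where the paper expands $E_n(S|\HH^k\TT)$ all at once as $\sum_{i=1}^\ell E_n(S|\HH^k\TT^i\HH)+E_n(S|\HH^k\TT^{\ell+1})$ and solves for the target, whereas you telescope stepwise to $E_n(S|\HH^k\TT^\ell)$ first and then split once more --- the computations are identical, and your explicit naming of the reduction identity (with its Knuth--Morris--Pratt interpretation) is a clean packaging of what the paper applies ad hoc.
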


\begin{proof}
(i) For $i=1,\ldots,k$, we have $E_n(S|\HH^{i-1}\TT) = E_{n-i}$ as the occurrence of $\HH^{i-1}\TT$ at the beginning does not affect when $S$ will occur, and this implies by induction that 
\[
E_n(S|\HH^i) = E_n(S|\HH^{i-1}) - E_n(S|\HH^{i-1}\TT) = E_n-E_{n-1}-\cdots-E_{n-i}.
\]  

(ii) We have $E_n(S|\HH^k\TT) = E_n(S|\HH^k) - E_n(S|\HH^{k+1})$ and $E_n(S|\HH^{k+1}) = E_{n-1}(S|\HH^k)$.
Since the result from (i) is still valid here, it follows that $E_n(S|\HH^k\TT) = E_n-2E_{n-1}+E_{n-k-1}$.

(iii) In this case we have an alternative way to compute $E_n(S|\HH^k\TT)$:
\begin{align*}
E_n(S|\HH^k\TT) =& \sum_{i=1}^\ell E_n(S|\HH^k\TT^i\HH) + E_n(S|\HH^k\TT^{\ell+1}) \\
=& \sum_{i=1}^{\ell-1} E_{n-k-i}(S|\HH) + E_n(S|\HH^k\TT^\ell\HH) + E_{n-k-\ell-1} \\
=& \sum_{i=1}^{\ell-1} (E_{n-k-i}- E_{n-k-i-1}) + E_n(S|\HH^k\TT^\ell\HH) + E_{n-k-\ell-1} \\
=& E_{n-k-1} - E_{n-k-\ell} + E_n(S|\HH^k\TT^\ell\HH) + E_{n-k-\ell-1}.
\end{align*}
Comparing this with (ii) gives the desired result.
\end{proof}

Recall that the \emph{Fibonacci number of order $k$} is defined by $F_n^k = 0$ for $n<k-1$, $F_n^k=1$ for $n=k-1$, and $F_n^k = F_{n-1}^k + F_{n-2}^k + \cdots + F_{n-k}^k$ for $n\ge k$.
We have $F_{n,1}=1$ for $n\ge0$, $F_{n,2}=F_n$ is the well-known \emph{Fibonacci number}~\cite[A000045]{OEIS}, $F_{n,3}$ is the \emph{tribonacci number}~\cite[A000073]{OEIS}, $F_{n,4}$ is the \emph{tetranacci number}~\cite[A000078]{OEIS}, and so on.

We define a variation of the Fibonacci number of order $k$ by $\overline F_n^k = 0$ for $n<k-1$, $\overline F_n^k = 1$ for $n=k-1$, and $\overline F_n^k = \overline F_{n-1}^k + \overline F_{n-2}^k + \cdots + \overline F_{n-k}^k + 1$ for $n\ge k$.
By a direct argument or using the generating functions given in Remark~\ref{rem1}, one can show that the above recurrence is equivalent to either $\overline F_n^k = 2\overline F_{n-1}^k -  \overline F_{n-k-1}^k$ or
\[
\overline F_n^k = \overline F_{n-1}^k + F_n^k
= \overline F_{n-2}^k + F_{n-1}^k + F_n^k 
= \cdots = F_0^k + F_1^k + \cdots + F_n^k.
\]
Special cases include $\overline F_n^1=n+1$, $\overline F_n^2 = F_{n+2}-1$~\cite[A000071]{OEIS}, $\overline F_n^3$~\cite[A008937]{OEIS}, $\overline F_n^4$~\cite[A107066]{OEIS}, and so on.

We now compute $E_n(S)$ when $S$ consists of at most two maximal runs of heads or tails.

\begin{proposition}\label{prop1}
For any integers $k,\ell\ge1$, we have $E_n(\HH^k) = F_{n-1}^k$ and $E_n(\HH^k\TT^\ell) = \overline F_{n-2}^{k+\ell-1}$.
\end{proposition}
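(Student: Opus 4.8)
The plan is to treat both identities in the same spirit as the rest of Section~\ref{sec:main}: for each pattern $S$ I would derive a linear recurrence for $E_n(S)$ together with its initial values, and then observe that these coincide with the defining recurrence and initial values of $F_n^k$ or $\overline F_n^k$. Note that \eqref{eq:E_n} and the real numbers $E(S)$ play no role here: the whole argument is about counting strings. The main tool is Lemma~\ref{lem:HT'}, whose three parts were set up precisely for this.

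For the first identity, write $E_n=E_n(\HH^k)$. I would observe that a length-$n$ string beginning with $\HH^k$ in which $\HH^k$ occurs only at the end must have length exactly $k$, so $E_n(\HH^k\mid\HH^k)$ equals $1$ if $n=k$ and $0$ if $n>k$. Comparing this with the formula $E_n(\HH^k\mid\HH^k)=E_n-E_{n-1}-\cdots-E_{n-k}$ from Lemma~\ref{lem:HT'}(i) with $i=k$, and using the obvious fact that $E_n=0$ for $n<k$, I get $E_k=1$ and $E_n=E_{n-1}+\cdots+E_{n-k}$ for $n>k$. These are exactly the initial condition and recurrence defining $F_{n-1}^k$, so an induction on $n$ finishes this case.

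For the second identity, write $S=\HH^k\TT^\ell$, $E_n=E_n(S)$, and $p=k+\ell-1$, so the target is $E_n=\overline F_{n-2}^{\,p}$. First I would settle the small cases directly: $E_n=0$ for $n<k+\ell$ and $E_{k+\ell}=1$ (the only valid string is $S$ itself), which match $\overline F_{n-2}^{\,p}$ since $\overline F_m^{\,p}=0$ for $m<p-1$ and $\overline F_{p-1}^{\,p}=1$. For $n\ge k+\ell+1$ the aim is the recurrence $E_n=2E_{n-1}-E_{n-k-\ell}$, which is exactly the relation $\overline F_m^{\,p}=2\overline F_{m-1}^{\,p}-\overline F_{m-p-1}^{\,p}$ (valid for $m\ge p$) under the substitution $m=n-2$ and $k+\ell=p+1$; an induction then completes the proof. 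To produce this recurrence I would evaluate $E_n(S\mid\HH^k\TT)$ in two ways. Lemma~\ref{lem:HT'}(ii) gives it as $E_n-2E_{n-1}+E_{n-k-1}$. Directly: for $n\ge k+\ell+1$ a valid string starting with $\HH^k\TT$ cannot start with the pattern $S=\HH^k\TT^\ell$ itself (that would put an occurrence of $S$ at the beginning rather than only at the end), so after the leading $\HH^k$ the run of tails has length $j$ for some $1\le j\le\ell-1$ and is then broken by a heads; since the prefix $\HH^k\TT^j\HH$ contains no occurrence of $S$ and any occurrence of $S$ in a longer string cannot start inside its $\HH^k$ or $\TT^j$ blocks, one gets $E_n(S\mid\HH^k\TT^j\HH)=E_{n-k-j}(S\mid\HH)$. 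Summing over $j$ and applying Lemma~\ref{lem:HT'}(i) with $i=1$, namely $E_m(S\mid\HH)=E_m-E_{m-1}$, the sum telescopes to $E_{n-k-1}-E_{n-k-\ell}$ (the case $\ell=1$, with an empty sum, works automatically). Equating the two evaluations cancels $E_{n-k-1}$ and leaves $E_n=2E_{n-1}-E_{n-k-\ell}$, as wanted.

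The step I expect to be the main obstacle is the bijective identity $E_n(S\mid\HH^k\TT^j\HH)=E_{n-k-j}(S\mid\HH)$, together with keeping every boundary case consistent: one must check that $\HH^k\TT^j\HH$ with $1\le j\le\ell-1$ cannot contain $\HH^k\TT^\ell$, and that because the pattern begins with $\HH$ no occurrence of it in an extension $\HH^k\TT^j\HH v$ can begin in the leading $\HH^k$ block (the ensuing tails run is too short to complete $\TT^\ell$) or in the $\TT^j$ block (it would have to begin with a tails), so that the only relevant part of that prefix for future matching is its final heads; one must also make sure the conventions $E_m=0$ for $m\le 0$ and for $m<k+\ell$ stay consistent across the telescoping and the single-string base case $E_{k+\ell}=1$. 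As an alternative that avoids this bookkeeping, I would remark that $\HH^k\TT^\ell$ is an unbordered word, so $E_n(\HH^k\TT^\ell)$ equals the number of binary strings of length $n-k-\ell$ that avoid $\HH^k\TT^\ell$ (any such string may be followed by $\HH^k\TT^\ell$ without creating an earlier occurrence); this count depends only on $k+\ell$, and a short generating-function computation identifies it with $\overline F_{n-2}^{\,k+\ell-1}$, in agreement with the generating functions in Remark~\ref{rem1}.
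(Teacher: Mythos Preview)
Your proposal is correct and follows essentially the same approach as the paper's own proof: for $S=\HH^k$ you use Lemma~\ref{lem:HT'}(i) with $i=k$ and the vanishing of $E_n(S\mid\HH^k)$ for $n>k$, and for $S=\HH^k\TT^\ell$ you compute $E_n(S\mid\HH^k\TT)$ both via Lemma~\ref{lem:HT'}(ii) and by the decomposition $E_n(S\mid\HH^k\TT)=\sum_{j=1}^{\ell-1}E_{n-k-j}(S\mid\HH)$, telescoping with Lemma~\ref{lem:HT'}(i) to obtain $E_n=2E_{n-1}-E_{n-k-\ell}$. The paper does exactly this, only more tersely; your explicit justification of the reduction $E_n(S\mid\HH^k\TT^j\HH)=E_{n-k-j}(S\mid\HH)$ and your handling of the boundary cases are, if anything, more careful than the paper's, and your closing remark about $\HH^k\TT^\ell$ being unbordered is a nice alternative the paper only alludes to later in Remark~\ref{rem1}.
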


\begin{proof}
First, let $E_n=E_n(S)$, where $S=\HH^k$.
Then $E_n=0$ if $n<k$ and $E_n=1$ if $n=k$.
For $n>k$ we have $0=E_n(S|\HH^k) = E_n-E_{n-1}-\cdots-E_{n-k}$ by Lemma~\ref{lem:HT'} (i).
Thus $E_n = F_{n-1}^k$. 

Next, let $E_n = E_n(S)$, where $S = \HH^k\TT^\ell$.
Then $E_n=0$ if $n< k+\ell$ and $E_n = 1$ if $n=k+\ell$.
Assume $n>k+\ell$ below.
By Lemma~\ref{lem:HT'}, we have $E_n = 2E_{n-1}-E_{n-k-1} +E_n(S|\HH^k\TT)$ and
\begin{align*}
E_n(S|\HH^k\TT) 
&= \sum_{i=1}^{\ell-1} E_n(S|\HH^k\TT^i\HH) + E_n(S|\HH^k\TT^\ell)
= \sum_{i=1}^{\ell-1} E_{n-k-i}(\HH)  \\
&= \sum_{i=1}^{\ell-1} (E_{n-k-i}-E_{n-k-i-1}) 
= E_{n-k-1} - E_{n-k-\ell}.
\end{align*}
Thus $E_n = 2E_{n-1}-E_{n-k-\ell}$.
It follows that $E_n = \overline F_{n-2}^{k+\ell-1}$.
\end{proof}

Combining Proposition~\ref{prop1} with Eq.~\eqref{eq:E_n} and Theorem~\ref{thm1} gives the following corollary.

\begin{corollary}\label{cor:id1}
For each integer $k\ge1$, we have
%\begin{enumerate}
%\item[(i)]
$\displaystyle \sum_{n=0}^\infty \frac{n F_{n-1}^k}{2^n} = 2^{k+1}-2$
and 
%\item[(ii)]
$\displaystyle \sum_{n=0}^\infty \frac{n \overline F_{n-2}^k}{2^n} = 2^{k+1}$.
%\end{enumerate}
\end{corollary}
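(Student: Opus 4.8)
The plan is to combine the two ingredients we have already assembled. From Proposition~\ref{prop1} we know the exact value of the counting quantities $E_n(\HH^k)=F_{n-1}^k$ and $E_n(\HH^k\TT^\ell)=\overline F_{n-2}^{k+\ell-1}$, while Equation~\eqref{eq:E_n} tells us that $E(S)=\sum_{n\ge0} n E_n(S)/2^n$, and Theorem~\ref{thm1} supplies the closed forms $E(\HH^k)=2^{k+1}-2$ and $E(\HH^k\TT^\ell)=2^{k+\ell}$. So the proof is essentially a one-line substitution in each case.

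First I would take $S=\HH^k$. Plugging $E_n(\HH^k)=F_{n-1}^k$ into Equation~\eqref{eq:E_n} gives
\[
\sum_{n=0}^\infty \frac{n F_{n-1}^k}{2^n} = E(\HH^k),
\]
and then invoking $E(\HH^k)=2^{k+1}-2$ from Theorem~\ref{thm1} yields the first identity. Second, I would take $S=\HH^k\TT^1$, i.e. $\ell=1$ in the second half of Proposition~\ref{prop1}, so that $E_n(\HH^k\TT)=\overline F_{n-2}^{k}$; substituting into Equation~\eqref{eq:E_n} and using $E(\HH^k\TT)=2^{k+1}$ from Theorem~\ref{thm1} gives
\[
\sum_{n=0}^\infty \frac{n \overline F_{n-2}^k}{2^n} = 2^{k+1},
\]
which is the second identity. (One could equally well take $\ell$ arbitrary and write the identity in terms of $\overline F_{n-2}^{k+\ell-1}$, but specializing $\ell=1$ gives the cleaner statement with a single parameter.)

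The only point requiring a word of care — and the closest thing to an obstacle — is convergence: one must check that the series $\sum n E_n(S)/2^n$ actually converges, so that Equation~\eqref{eq:E_n} is a genuine equality of numbers rather than a formal identity. This follows because $E(S)$ is finite (the game ends in finite expected time, as each fixed window of $|S|$ flips independently equals $S$ with positive probability, giving a geometric tail bound), and the nonnegativity of each term $n E_n(S)/2^n$ then forces absolute convergence. Since this finiteness of $E(S)$ is implicit in Theorem~\ref{thm1} and the surrounding discussion, the corollary is immediate, and I would present it as such with only a brief remark on convergence.
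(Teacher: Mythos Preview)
Your proof is correct and matches the paper's approach essentially line for line: the paper also substitutes $S=\HH^k$ into Eq.~\eqref{eq:E_n} for the first identity and $S=\HH^k\TT^\ell$ for the second, noting that the resulting formula $\sum_{n\ge0} n\overline F_{n-2}^{k+\ell-1}/2^n = 2^{k+\ell}$ is equivalent to the stated one. The only cosmetic difference is that you specialize to $\ell=1$ directly while the paper keeps $\ell$ general and then reindexes; your added remark on convergence is not in the paper but is a reasonable clarification.
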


\begin{proof}
Taking $S=\HH^k$ in Eq.~\eqref{eq:E_n} gives the first desired identity. 
Taking $S=\HH^k\TT^\ell$ in Eq.~\eqref{eq:E_n} gives 
$\displaystyle \sum_{n=0}^\infty \frac{n \overline F_{n-2}^{k+\ell-1}}{2^n} = E(\HH^k\TT^\ell) = 2^{k+\ell}$, which is equivalent to the second desired identity.
\end{proof}

\begin{remark}\label{rem1}
We can prove Corollary~\ref{cor:id1} by generating functions.
Let $f(x) = \sum_{n\ge0} F_{n-1}^k x^n$. Then
\begin{align*}
xf(x) + x^2f(x) + \cdots + x^{k} f(x) 
& = \sum_{n\ge k} \sum_{i=1}^k F_{n-i}^k x^{n+1} 
= \sum_{n\ge k} F_n^k x^{n} = f(x) - x^{k}.
\end{align*}
Thus $f(x) = \frac{x^{k}}{1-x-x^2-\cdots-x^k}$.
%Taking the derivative of $f(x)$ gives
%\[ \sum_{n=1}^\infty n F_{n-1}^k x^{n-1} = \frac{kx^{k-1} - \sum_{i=1}^{k}(k-i)x^{k-1+i}}{(1-x-x^2-\cdots-x^k)^2}. \]
We can then substitute $x=\frac12$ into $xf'(x)$ and simplify the result by Lemma~\ref{lem:id} to obtain the first identity in Corollary~\ref{cor:id1}.
%\[ \sum_{n=1}^\infty \frac{n F_{n-1}^k}{2^{n}} = k 2^k - \sum_{j=1}^{k-1} i 2^i = k 2^k - 2 - (k-2)2^{k} = 2^{k+1}-2. \]
Similarly, $\overline f(x) = \sum_{n\ge 0} \overline F_{n-2}^k x^n$ satisfies
\[
x\overline f(x) + x^2\overline f(x) + \cdots + x^{k} \overline f(x) 
= \sum_{n \ge k} \sum_{i=1}^k \overline F_{n-i}^k x^{n+2} 
= \sum_{n\ge k} (\overline F_n^k-1) x^{n+2} = \overline f(x) - \frac{x^{k+1}}{1-x}.
\]
Thus $\overline f(x) = \frac{x^{k+1}}{(1-x)(1-x-x^2-\cdots-x^k)}
= \frac{x^{k+1}}{1-2x+x^{k+1}}$.
%Taking the derivative of $\overline f(x)$ gives
%\[ \sum_{n=0}^\infty n \overline F_{n-2}^k x^{n-1} 
%= \frac{(k+1)x^k(1-2x+x^{k+1}) - x^{k+1}(-2+(k+1)x^k)}{(1-2x+x^{k+1})^2}. \]
Substituting $x=\frac12$ into $xf'(x)$ gives the second identity in Corollary~\ref{cor:id1}.
%Note that $\overline f(x) - x \overline f(x) = f(x)$.
%This implies $\overline F_n^k = \overline F_{n-1}^k + F_n^k$.
\end{remark}

Next, we introduce a two-parameter generalization of both $F_n^k$ and $\overline F_n^k$.
Let $F_{n}^{k,m} = 0$ for $n<k+m-1$, $F_{n}^{k,m} = 1$ for $n=k+m-1$, and for $n\ge k+m$,
\[ F_{n}^{k,m} =
2F_{n-1}^{k,m}- F_{n-k-1}^{k,m} + \sum_{i=1}^{m} F_{n-k-i-1}^{k,m}. 
\]
For $k=0$ we have $F_n^{0,m} = F_{n+1}^{m+1}$ since
\[ F_n^{0,m} = 2F_{n-1}^{0,m} - F_{n-1}^{0,m} + \sum_{i=1}^{m} F_{n-i-1}^{0,m} 
= \sum_{i=1}^{m+1} F_{n-i}^{0,m}.
\]
For $m=0$ we have $F_n^{k,0} = 2F_{n-1}^{k,0} - F_{n-k-1}^{k,0}$, so $F_n^{k,0} = \overline F_{n}^{k}$.
The case $k=m=1$~\cite[A005314]{OEIS} and the case $(k,m)=(2,1)$~\cite[A059633]{OEIS} have also been studied before from other perspectives.

We can use $F_n^{k,m}$ to express $E_n(S)$ when $S$ consists of three maximal runs of heads or tails.

\begin{proposition}\label{prop2}
For any integers $k,\ell,m\ge1$, we have $E_n(\HH^k\TT^\ell\HH^m) = F_{n-2}^{k+\ell-1,m}$ if $m\le k$ and 
$E_n(\HH^k\TT^\ell\HH^m) = F_{n-2}^{\ell+m-1,k}$ if $m>k$.
\end{proposition}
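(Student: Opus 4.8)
The plan is to argue exactly as in the proof of Proposition~\ref{prop1}. Set $S=\HH^k\TT^\ell\HH^m$ and $E_n=E_n(S)$. The base cases are immediate: $E_n=0$ for $n<k+\ell+m$ and $E_n=1$ for $n=k+\ell+m$, and these agree with the initial conditions of $F_{n-2}^{k+\ell-1,m}$ (when $m\le k$) and of $F_{n-2}^{\ell+m-1,k}$ (when $m>k$), since $F_N^{p,q}$ vanishes for $N<p+q-1$ and equals $1$ at $N=p+q-1$, while $(p,q)=(k+\ell-1,m)$ or $(\ell+m-1,k)$ both give $p+q-1=k+\ell+m-2$. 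So the entire content is to show that, for $n>k+\ell+m$, the sequence $E_n$ obeys the defining recurrence of the relevant $F^{p,q}$.

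To produce that recurrence I would compute $E_n(S|\HH^k\TT^\ell\HH)$ in two ways. First, since $S$ begins with $\HH^k\TT^\ell\HH$, part (iii) of Lemma~\ref{lem:HT'} gives $E_n(S|\HH^k\TT^\ell\HH)=E_n-2E_{n-1}+E_{n-k-\ell}-E_{n-k-\ell-1}$. Second, for $n>k+\ell+m$ every string counted by $E_n(S|\HH^k\TT^\ell\HH)$ has its maximal run of heads following the opening $\HH^k\TT^\ell$ of some length $j$ with $1\le j\le m-1$ (length $\ge m$ would make $S$ occur at the very start), and that run is then followed by a tails, so
\[ E_n(S|\HH^k\TT^\ell\HH)=\sum_{j=1}^{m-1} E_n(S|\HH^k\TT^\ell\HH^j\TT). \]
The key combinatorial step is to recognize the ``reset state'' of $\HH^k\TT^\ell\HH^j\TT$, that is, its longest suffix that is also a prefix of $S$. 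Because $S=\HH^k\TT^\ell\HH^m$ begins with $\HH^k\TT$ and has $\ell\ge1$, that longest suffix is $\HH^k\TT$ when $j\ge k$ and is empty when $j<k$; moreover $S$ never occurs inside $\HH^k\TT^\ell\HH^j\TT$, since all of its runs of heads have length less than $m$. Hence, by the same reduction used in the proof of Lemma~\ref{lem:HT'}, $E_n(S|\HH^k\TT^\ell\HH^j\TT)=E_{n-k-\ell-j-1}$ for $j<k$, and $E_n(S|\HH^k\TT^\ell\HH^j\TT)=E_{n-\ell-j}(S|\HH^k\TT)=E_{n-\ell-j}-2E_{n-\ell-j-1}+E_{n-\ell-j-k-1}$ for $k\le j\le m-1$ by part (ii) of Lemma~\ref{lem:HT'}.

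Now split into the two cases. If $m\le k$, then every $j\in\{1,\dots,m-1\}$ has $j<k$, so the sum above is $\sum_{j=1}^{m-1}E_{n-k-\ell-j-1}$; equating it with the part (iii) expression and reindexing yields $E_n=2E_{n-1}-E_{n-k-\ell}+\sum_{i=1}^{m}E_{n-k-\ell-i}$, which is precisely the recurrence defining $F_{n-2}^{k+\ell-1,m}$ with $(p,q)=(k+\ell-1,m)$. If $m>k$, I would split the sum at $j=k$, substitute the $\HH^k\TT$-state values for $k\le j\le m-1$, telescope $\sum_{j=k}^{m-1}(E_{n-\ell-j}-2E_{n-\ell-j-1})$, and merge the two blocks $\sum_{i=1}^{k}E_{n-k-\ell-i}$ and $\sum_{j=k}^{m-1}E_{n-\ell-j-k-1}$ into a single consecutive run $\sum_{s=k+1}^{m+k}E_{n-\ell-s}$; after the cancellations this collapses to $E_n=2E_{n-1}-E_{n-\ell-m}+\sum_{i=1}^{k}E_{n-\ell-m-i}$, the recurrence defining $F_{n-2}^{\ell+m-1,k}$ with $(p,q)=(\ell+m-1,k)$. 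In either case an induction on $n$ finishes the proof. The main obstacle is exactly these two pointwise facts: correctly pinning down the reset state of $\HH^k\TT^\ell\HH^j\TT$ (this is where the split $m\le k$ versus $m>k$ originates, and where one must rule out both longer overlaps and interior occurrences of $S$), and then carrying out the telescoping and reindexing in the case $m>k$ carefully enough that it visibly lands on the definition of $F_n^{k,m}$. One could additionally remark that, combined with Proposition~\ref{prop1} and the $k\leftrightarrow m$ symmetry of the resulting formulas, this re-proves $E_n(\HH^k\TT^\ell\HH^m)=E_n(\HH^m\TT^\ell\HH^k)$, but I would not use that symmetry inside the argument.
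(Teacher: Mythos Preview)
Your proposal is correct and follows essentially the same approach as the paper: both compute $E_n(S\mid\HH^k\TT^\ell\HH)$ via Lemma~\ref{lem:HT'}(iii) on one hand and via the decomposition $\sum_{j=1}^{m-1}E_n(S\mid\HH^k\TT^\ell\HH^j\TT)$ on the other, identify the reset state of $\HH^k\TT^\ell\HH^j\TT$ as empty or $\HH^k\TT$ according to whether $j<k$ or $j\ge k$, and then carry out the same telescoping in the case $m>k$ to land on the defining recurrence of $F^{p,q}$. Your explicit discussion of why the longest prefix--suffix overlap is what you claim (and why no interior copy of $S$ appears) is a helpful addition that the paper leaves implicit.
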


\begin{proof}
Let $E_n=E_n(S)$, where $S=\HH^k\TT^\ell\HH^m$. 
We have $E_n=0$ for $n<k+\ell+m$ and $E_n=1$ for $n=k+\ell+m$.
It remains to show for $n>k+\ell+m$ that
\begin{equation}\label{eq:HTH}
E_n = 
\begin{cases}
2E_{n-1} - E_{n-k-\ell} + \sum_{i=1}^{m} E_{n-k-\ell-i}, &\text{if } m\le k; \\[.5em]
2E_{n-1} -E_{n-\ell-m} + \sum_{i=1}^k E_{n-\ell-m-i}, & \text{if } m>k.
\end{cases}
\end{equation} 
By Lemma~\ref{lem:HT} (iii), we have the following, where the last term is zero since $n>k+\ell+m$:
\[ 
E_n = 2E_{n-1} - E_{n-k-\ell} + E_{n-k-\ell-1} + \sum_{i=1}^{m-1} E_n(S|\HH^k\TT^\ell\HH^i\TT) + E_n(S|\HH^k\TT^\ell\HH^m).
\]
If $k\le i<m$ then 
$E_n(S|\HH^k\TT^\ell\HH^i\TT) = E_{n-\ell-i}(S|\HH^k\TT) = E_{n-\ell-i} - 2E_{n-\ell-i-1} + E_{n-k-\ell-i-1}$
by Lemma~\ref{lem:HT'}.
If $i<\min\{k,m\}$ then $E_n(S|\HH^k\TT^\ell\HH^i\TT) = E_{n-k-\ell-i-1}$.
Thus when $m\le k$ we have 
$E_n = 2E_{n-1} - E_{n-k-\ell} + \sum_{i=1}^{m} E_{n-k-\ell-i}$
and when $m>k$ we have
\begin{align*}
E_n &= 2E_{n-1} - E_{n-k-\ell} + \sum_{i=1}^{k} E_{n-k-\ell-i}
+ \sum_{i=k}^{m-1} (E_{n-\ell-i} - 2E_{n-\ell-i-1} + E_{n-k-\ell-i-1}) \\
&=  2E_{n-1} + \sum_{i=1}^{k} E_{n-k-\ell-i} - \sum_{i=k+1}^{m-1} E_{n-\ell-i} -2E_{n-\ell-m} + \sum_{i=k+1}^{m} E_{n-k-\ell-i} \\
&=  2E_{n-1} -2E_{n-\ell-m} + \sum_{i=m-k}^m E_{n-k-\ell-i}
=  2E_{n-1} -E_{n-\ell-m} + \sum_{i=1}^k E_{n-\ell-m-i}.
\qedhere
\end{align*}
\end{proof}

Proposition~\ref{prop2} suggests an identity involving $F_n^{k,m}$, which can be proved by generating functions to avoid the restriction on the relative sizes of $k$ and $m$.

\begin{corollary}\label{cor:id2}
For any integers $k,m\ge1$ we have $\displaystyle \sum_{n\ge1} \frac{n F_{n-2}^{k,m}}{2^n} = 2^{k+m+1}+2^{m+1}-2$.
\end{corollary}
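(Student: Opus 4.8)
The plan is to prove Corollary~\ref{cor:id2} by generating functions, following the pattern of Remark~\ref{rem1}. Working with the generating function of $F_n^{k,m}$ obtained directly from the defining recurrence has the advantage of not splitting into the cases $m\le k$ and $m>k$ that occur in Proposition~\ref{prop2}; in particular it proves the identity even in the range (roughly $m>k$) where Proposition~\ref{prop2} together with Eq.~\eqref{eq:E_n} and Theorem~\ref{thm2} does not directly apply.

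\textbf{Step 1: the generating function.} First I would determine $g(x):=\sum_{n\ge0}F_{n-2}^{k,m}x^n$ in closed form. Since $F_{n-2}^{k,m}=0$ for $n\le k+m$ and $F_{k+m-1}^{k,m}=1$, we have $g(x)=x^{k+m+1}+\sum_{n\ge k+m+2}F_{n-2}^{k,m}x^n$. For $n\ge k+m+2$ the index $n-2$ is at least $k+m$, so the recurrence gives $F_{n-2}^{k,m}=2F_{n-3}^{k,m}-F_{n-k-3}^{k,m}+\sum_{i=1}^m F_{n-k-i-3}^{k,m}$; multiplying by $x^n$ and summing over $n\ge k+m+2$, each of the three resulting series, after re-indexing, has lower summation limit in the range $[1,k+m+1]$ (using $k\ge1$ and $1\le i\le m$), hence equals $g(x)$ because $F_{n-2}^{k,m}$ vanishes for $n\le k+m$. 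This yields $g(x)-x^{k+m+1}=g(x)\bigl(2x-x^{k+1}+\sum_{i=1}^m x^{k+i+1}\bigr)$, so
\[ g(x)=\frac{x^{k+m+1}}{1-2x+x^{k+1}-\sum_{i=1}^m x^{k+i+1}}. \]
As consistency checks, $m=0$ recovers $\overline f(x)=x^{k+1}/(1-2x+x^{k+1})$ from Remark~\ref{rem1} (consistent with $F_n^{k,0}=\overline F_n^k$), and $k=0$ recovers the corresponding order-$(m+1)$ Fibonacci generating function $x^{m+1}/(1-x-\cdots-x^{m+1})$ of Remark~\ref{rem1} (consistent with $F_n^{0,m}=F_{n+1}^{m+1}$).

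\textbf{Step 2: extracting the weighted sum.} Since $\sum_{n\ge1}\frac{nF_{n-2}^{k,m}}{2^n}=\bigl[xg'(x)\bigr]_{x=1/2}=\tfrac12 g'(1/2)$, it remains to evaluate $g'(1/2)$. Write $g=N/D$ with $N(x)=x^{k+m+1}$ and $D(x)=1-2x+x^{k+1}-\sum_{i=1}^m x^{k+i+1}$. The crucial observation is that, by the first identity of Lemma~\ref{lem:id},
\[ D(\tfrac12)=2^{-(k+1)}\Bigl(1-\textstyle\sum_{i=1}^m 2^{-i}\Bigr)=2^{-(k+1)}\cdot 2^{-m}=2^{-(k+m+1)}=N(\tfrac12), \]
so in the quotient rule $g'(1/2)=\dfrac{N'(1/2)D(1/2)-N(1/2)D'(1/2)}{D(1/2)^2}$ the equal factors $N(1/2)=D(1/2)$ cancel and $g'(1/2)=2^{k+m+1}\bigl(N'(1/2)-D'(1/2)\bigr)$. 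Now $N'(1/2)=(k+m+1)2^{-(k+m)}$, and using both identities of Lemma~\ref{lem:id} to compute $\sum_{i=1}^m(k+i+1)2^{-i}=(k+3)-(k+m+3)2^{-m}$ one obtains $D'(1/2)=-2-2^{1-k}+(k+m+3)2^{-(k+m)}$. Hence $N'(1/2)-D'(1/2)=2+2^{1-k}-2^{1-(k+m)}$ and
\[ \sum_{n\ge1}\frac{nF_{n-2}^{k,m}}{2^n}=\tfrac12 g'(1/2)=2^{k+m}\bigl(2+2^{1-k}-2^{1-(k+m)}\bigr)=2^{k+m+1}+2^{m+1}-2. \]

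The main obstacle is entirely computational: in Step~1 one must verify carefully that after re-indexing every series really does collapse to $g(x)$ with no leftover low-degree terms (this is exactly where $k,m\ge1$ and the location $n=k+m+1$ of the first nonzero coefficient of $(F_{n-2}^{k,m})_n$ enter), and in Step~2 one must push the quotient-rule algebra and the two substitutions at $x=\tfrac12$ through without sign errors, the two elementary identities of Lemma~\ref{lem:id} doing all the real work. As an independent check of the final formula, taking $S=\HH^k\TT\HH^m$ with $m\le k$ in Proposition~\ref{prop2}, Eq.~\eqref{eq:E_n}, and Theorem~\ref{thm2} already gives $\sum_n\frac{nF_{n-2}^{k,m}}{2^n}=E(\HH^k\TT\HH^m)=2^{k+m+1}+2^{m+1}-2$ for $m\le k$, matching the claimed identity on that range.
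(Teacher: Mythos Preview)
Your proof is correct and follows essentially the same approach as the paper's own proof: derive the generating function $g(x)=\sum_{n\ge0}F_{n-2}^{k,m}x^n=\dfrac{x^{k+m+1}}{1-2x+x^{k+1}-\sum_{i=1}^m x^{k+i+1}}$ from the defining recurrence, then evaluate $xg'(x)$ at $x=\tfrac12$ using Lemma~\ref{lem:id}. The paper merely states the generating function and says to evaluate and simplify; you have carried out those computations in full (including the useful observation $N(\tfrac12)=D(\tfrac12)$ that streamlines the quotient-rule algebra), so your argument is a fleshed-out version of the paper's.
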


\begin{proof}
By the definition of $F_n^{k,m}$, we have the generating function
\[ \sum_{n\ge0} F_{n-2}^{k,m} x^n 
= \frac{x^{k+m+1}}{1-2x+x^{k+1}-\sum_{i=1}^m x^{k+i+1}}.
\]
%\[ 2xf(x) - x^{k+1}f(x) + \sum_{i=1}^m x^{k+i+1} f(x) = f(x) - x^{k+m+1}. \]
Evaluating $x$ times the derivative of the above generating function at $x=\frac12$ and using Lemma~\ref{lem:id} to simplify the result we obtain the desired identity. 
\end{proof}

Note that one can recover Corollary~\ref{cor:id1} from Corollary~\ref{cor:id2} by taking $k=0$ or $m=0$.

Next, we introduce another two-parameter generalization of the Fibonacci numbers by defining $\widetilde F_n^{k,m}=0$ for $n<k+m-1$, $\widetilde F_n^{k,m}=1$ for $n=k+m-1$, and for $n>k+m-1$, 
\[ 
\widetilde F_n^{k,m} = 2 \widetilde F_{n-1}^{k,m} - \widetilde F_{n-k-1}^{k,m} 
+ 2 \widetilde F_{n-k-2}^{k,m} - \widetilde F_{n-k-m-2}^{k,m}.
\]
We have $\widetilde F_n^{0,0} = F_{n+2}$, $\widetilde F_n^{k,0} = F_{n+1}^{k,1}$, and some other interesting special cases of $\widetilde F_n^{k,m}$, such as
$(k,m)=(0,1)$~\cite[A006053]{OEIS},
$(k,m)=(0,2)$~\cite[A158943]{OEIS},
and $(k,m)=(1,1)$~\cite[A112575]{OEIS}. %can be obtained by applying Chebyshev transform of the second kind to the Pell numbers.

To derive an identity involving $\widetilde F_n^{k,m}$, we compute $E_n(S)$ when $S$ consists of four maximal runs of heads or tails.

\begin{proposition}\label{prop3}
For any integers $k,\ell,m,d\ge1$, we have 
\[ E_n(\HH^k\TT^\ell\HH^m\TT^d) = 
\begin{cases}
\overline F_{n-2}^{k+\ell+m+d-1}, & \text{if } m<k \text{ or } d>\ell; \\
\widetilde F_{n-3}^{\ell+m-1, k+d-1}, & \text{if } m\ge k \text{ and } d\le \ell.
\end{cases} \]
\end{proposition}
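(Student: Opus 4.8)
The plan is to mirror the proofs of Propositions~\ref{prop1} and~\ref{prop2}, one level deeper. Write $N=k+\ell+m+d=|S|$ and $E_n=E_n(S)$. The base cases $E_n=0$ for $n<N$ and $E_N=1$ are immediate (the only word of length $N$ in which $S$ occurs only at the end is $S$ itself), and they agree with both claimed formulas since $\overline F_{N-2}^{N-1}=1$ and $\widetilde F_{N-3}^{\ell+m-1,\,k+d-1}=1$. So it suffices, for $n>N$, to establish the recurrences
\[ E_n = 2E_{n-1}-E_{n-N} \qquad\text{if } m<k \text{ or } d>\ell, \]
\[ E_n = 2E_{n-1}-E_{n-\ell-m}+2E_{n-\ell-m-1}-E_{n-N} \qquad\text{if } m\ge k \text{ and } d\le\ell, \]
because these, with the initial data above, are exactly what one gets by reindexing the definitions of $\overline F^{N-1}$ and $\widetilde F^{\ell+m-1,\,k+d-1}$ via $E_n=\overline F_{n-2}^{N-1}$, resp.\ $E_n=\widetilde F_{n-3}^{\ell+m-1,\,k+d-1}$; equivalently they give the generating functions $\sum_{n}E_n x^n=x^N/(1-2x+x^N)$ and $x^N/(1-2x+x^{\ell+m}-2x^{\ell+m+1}+x^N)$.

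To obtain these recurrences, note that $S$ begins with $\HH^k\TT^\ell\HH$, so Lemma~\ref{lem:HT'}(iii) gives
\[ E_n(S|\HH^k\TT^\ell\HH) = E_n-2E_{n-1}+E_{n-k-\ell}-E_{n-k-\ell-1}. \]
I would compute this same quantity a second way, by unfolding the partial match $\HH^k\TT^\ell\HH$ toward $S=\HH^k\TT^\ell\HH^m\TT^d$: condition on the length $i\ge1$ of the maximal run of heads occupying positions $\ge k+\ell+1$ and, once $i=m$, on the length $j\ge1$ of the ensuing maximal run of tails. For $i<m$ the word continues with a tails, after which $\HH^k\TT^\ell\HH^i\TT$ collapses to its longest suffix that is a prefix of $S$ --- the empty word if $i<k$ and $\HH^k\TT$ if $k\le i<m$ --- exactly as in Proposition~\ref{prop2}; for $i=m$ and $j<d$ the word continues with a heads, after which $\HH^k\TT^\ell\HH^m\TT^j\HH$ collapses to $\HH^k\TT^\ell\HH$ when $m\ge k$ and $j=\ell$, and to $\HH$ otherwise; and for $i=m$, $j\ge d$ the word must equal $S$, so it contributes $0$ once $n>N$. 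Rewriting each resulting $E_n(S|\cdot)$ through Lemma~\ref{lem:HT'} and the expression for $E_n(S|\HH^k\TT^\ell\HH)$ already displayed, and collapsing the telescoping sums over $i$ and $j$, expresses $E_n(S|\HH^k\TT^\ell\HH)$ as a short linear combination of the $E_{n-t}$; comparing with the Lemma~\ref{lem:HT'}(iii) identity and solving for $E_n$ gives the recurrence. This is the counting analogue of Proof~1 of Theorem~\ref{thm3}, and the same two discriminating conditions arise: whether $m\ge k$ (so that the run $\HH^m$ can be re-used as a fresh copy of the initial $\HH^k$) and whether some $j\in\{1,\dots,d-1\}$ equals $\ell$ (which happens iff $d>\ell$); only when $m\ge k$ and $d\le\ell$ do the extra contributions survive and assemble into the new terms $-E_{n-\ell-m}+2E_{n-\ell-m-1}$.

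The main obstacle is the bookkeeping in this second computation: pinning down, for every partial word that appears, its longest self-overlap with a prefix of $S$ (the ``reset state''), and then checking that in the regime ``$m<k$ or $d>\ell$'' all the $E_{n-k-\ell}$-type corrections cancel against one another, leaving the clean two-term recurrence, while in the regime ``$m\ge k$ and $d\le\ell$'' they combine into the four-term recurrence above. The telescoping itself and the final matching with the definitions of $\overline F_n^k$ and $\widetilde F_n^{k,m}$ are routine. As a sanity check, at $n=N+1$ both recurrences give $E_{N+1}=2$, which agrees with the direct count: a length-$(N+1)$ word ending in $S$ has a unique length-$N$ prefix, and this prefix cannot equal $S$ because $S$ is not constant, so both choices of the first letter are allowed.
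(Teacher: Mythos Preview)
Your plan is essentially the paper's own proof: compute $E_n(S|\HH^k\TT^\ell\HH)$ in two ways, once via Lemma~\ref{lem:HT'}(iii) and once by unfolding toward $S$, then compare. The paper organizes the unfold by first quoting the Eq.~\eqref{eq:HTH} recurrence from Proposition~\ref{prop2} and then adding back the correction $E_n(S|\HH^k\TT^\ell\HH^m)$, but this amounts to the same computation you describe.

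One case is missing from your decomposition, though. You condition on the length $i\ge1$ of the maximal run of heads after $\TT^\ell$ and then discuss only $i<m$ and $i=m$. The case $i>m$ (equivalently, the event that position $k+\ell+m+1$ is another heads) is absent. In the paper this is the term $E_n(S|\HH^k\TT^\ell\HH^{m+1})$, which is \emph{not} negligible: it equals $E_{n-k-\ell}(S|\HH^{m+1})$ and, via Lemma~\ref{lem:HT'}(i), expands differently according to whether $m+1\le k$ or $m+1>k$. Without it your second expression for $E_n(S|\HH^k\TT^\ell\HH)$ is incomplete, and the cancellation you are counting on to reach $E_n=2E_{n-1}-E_{n-N}$ in the ``$m<k$ or $d>\ell$'' regime will not go through. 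Concretely, in that regime several terms from $E_n(S|\HH^k\TT^\ell\HH^{m+1})$ are needed to kill the residual $\sum E_{n-k-\ell-i}$ (or $\sum E_{n-\ell-m-i}$) contributions coming from the $i<m$ range; see how the paper adds its displayed expression for $E_n(S|\HH^k\TT^\ell\HH^m)$ back to Eq.~\eqref{eq:HTH}.

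This is an easy fix---for $i>m$ the prefix $\HH^k\TT^\ell\HH^i\TT$ collapses to $\HH^k\TT$ if $i\ge k$ and to the empty word otherwise, just as for $i<m$---and once you include it your stated target recurrences are exactly what the paper obtains. But as written, the case list is incomplete.
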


\begin{proof}
Let $E_n=E_n(S)$, where $S=\HH^k\TT^\ell\HH^m\TT^d$. 
We have $E_n=0$ for $n<k+\ell+m+d$ and $E_n=1$ for $n=k+\ell+m+d$.
Assume $n>k+\ell+m+d$ below. 
We can use Lemma~\ref{lem:HT'} to evaluate $E_n$ in the same way as the proof of Proposition~\ref{prop2} and obtain the same result except that $E_n(S|\HH^k\TT^\ell\HH^m)$ may not be zero.
Thus we only need to find $E_n(S|\HH^k\TT^\ell\HH^m)$ and add it to the right hand side of Eq.~\eqref{eq:HTH}.
We begin with the following, where the last term is zero since $n>k+\ell+m+d$:
\[ 
E_n(S|\HH^k\TT^\ell\HH^m) = E_n(S|\HH^k\TT^\ell\HH^{m+1}) 
+ \sum_{i=1}^{d-1} E_n(S|\HH^k\TT^\ell\HH^m\TT^i\HH) + E_n(S|\HH^k\TT^\ell\HH^m\TT^d).
\]
If $m<k$ then using $E_n(S|\HH^i)=E_n-E_{n-1}-\cdots-E_{n-i}$ for $i=1,\ldots,k$ from Lemma~\ref{lem:HT'} we obtain
\begin{align*}
E_n(S|\HH^k\TT^\ell\HH^m) 
=& E_{n-k-\ell} - \sum_{i=1}^{m+1} E_{n-k-\ell-i} + \sum_{i=1}^{d-1} (E_{n-k-\ell-m-i} - E_{n-k-\ell-m-i-1})  \\
=& E_{n-k-\ell} - \sum_{i=1}^{m} E_{n-k-\ell-i} - E_{n-k-\ell-m-d},
\end{align*}
and adding this to the right hand side of Eq.~\eqref{eq:HTH} in the case $m<k$ gives $E_n = 2E_{n-1} - E_{n-k-\ell-m-d}$.
Similarly, if $m\ge k$ and $d\le\ell$ then we can add
\begin{align*}
E_n(S|\HH^k\TT^\ell\HH^m) 
&= E_{n-\ell-m-1}(S|\HH^k) + \sum_{i=1}^{d-1} E_{n-k-\ell-m-i}(S|\HH) \\
&= E_{n-\ell-m-1}-\sum_{i=2}^k E_{n-\ell-m-i} - E_{n-k-\ell-m-d}
\end{align*}
to the right hand side of Eq.~\eqref{eq:HTH} in the case $m\ge k$ (the two cases there coincide when $m=k$) gives
\[ 
E_n = 2E_{n-1} -E_{n-\ell-m} + 2E_{n-\ell-m-1} - E_{n-k-\ell-m-d}.
\]
Finally, if $m\ge k$ and $d>\ell$ then by Lemma~\ref{lem:HT'},
\begin{align*}
E_n(S|\HH^k\TT^\ell\HH^m) 
=& E_{n-\ell-m-1}(S|\HH^k) + \sum_{1\le i<d,\ i\ne\ell} E_{n-k-\ell-m-i}(S|\HH)
+ E_{n-\ell-m}(S|\HH^k\TT^\ell\HH) \\
=& E_{n-\ell-m-1}-\sum_{i=2}^k E_{n-\ell-m-i} 
- E_{n-k-\ell-m-\ell} + E_{n-k-\ell-m-\ell-1} \\
& - E_{n-k-\ell-m-d} + E_{n-\ell-m} - 2E_{n-\ell-m-1}+E_{n-\ell-m-k-\ell}-E_{n-\ell-m-k-\ell-1}  \\
=& E_{n-\ell-m} - E_{n-\ell-m-1}-\sum_{i=2}^k E_{n-\ell-m-i}
- E_{n-k-\ell-m-d},
\end{align*}
and adding this to the right hand side of Eq.~\eqref{eq:HTH} in the case $m\ge k$ gives
$E_n = 2E_{n-1} - E_{n-k-\ell-m-d}$.
The result follows.
\end{proof}

Proposition~\ref{prop3} suggests an identity involving $\widetilde F_n^{k,m}$, which can be proved by generating functions to avoid any restriction on the relative sizes of $k$ and $m$.

\begin{corollary}\label{cor:id3}
For any integers $k,m\ge1$ we have $\displaystyle \sum_{n\ge1} \frac{n \widetilde F_{n-3}^{k,m}}{2^n} = 2^{k+m+2}+2^{m+1}$.
\end{corollary}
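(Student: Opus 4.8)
The plan is to follow the same strategy used for Corollaries~\ref{cor:id1} and~\ref{cor:id2}: first obtain a closed form for the ordinary generating function $g(x) = \sum_{n\ge0} \widetilde F_{n-3}^{k,m} x^n$, and then evaluate $x\,g'(x)$ at $x=\tfrac12$, since
\[ \sum_{n\ge1} \frac{n\widetilde F_{n-3}^{k,m}}{2^n} = \bigl[x\,g'(x)\bigr]_{x=1/2} \]
(the $n=0$ term vanishes, and $\widetilde F_{n-3}^{k,m}=0$ for $n\le 2$ because $k+m-1\ge1$). I would prefer the generating function route to a purely combinatorial one: Proposition~\ref{prop3} together with Eq.~\eqref{eq:E_n} and Theorem~\ref{thm3} only yields the identity for $\widetilde F_{n-3}^{\ell+m-1,\,k+d-1}$ under the constraint $m\ge k$ and $d\le\ell$, which does not allow us to realize an arbitrary pair of parameters, whereas the generating function argument is free of such restrictions (this is why the sentence preceding the corollary emphasizes ``to avoid any restriction on the relative sizes of $k$ and $m$'').

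The first step is to derive the generating function. Writing $a_n = \widetilde F_n^{k,m}$, I would multiply the defining recurrence $a_n - 2a_{n-1} + a_{n-k-1} - 2a_{n-k-2} + a_{n-k-m-2} = 0$ (valid for $n\ge k+m$) by $x^n$ and sum over all $n\ge0$; the right-hand side contributes only the boundary term $a_{k+m-1}x^{k+m-1} = x^{k+m-1}$, because $a_j=0$ for $j<k+m-1$ and $k,m\ge1$ force every other term in $a_n - 2a_{n-1}+a_{n-k-1}-2a_{n-k-2}+a_{n-k-m-2}$ to vanish for $0\le n< k+m$. Hence
\[ \sum_{n\ge0}\widetilde F_n^{k,m} x^n = \frac{x^{k+m-1}}{1-2x+x^{k+1}-2x^{k+2}+x^{k+m+2}}, \]
and shifting the index by $3$ gives
\[ g(x) = \sum_{n\ge0}\widetilde F_{n-3}^{k,m} x^n = \frac{x^{k+m+2}}{1-2x+x^{k+1}-2x^{k+2}+x^{k+m+2}}. \]

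The second step is the evaluation. Write $g = N/D$ with $N(x) = x^{k+m+2}$ and $D(x) = 1-2x+x^{k+1}-2x^{k+2}+x^{k+m+2}$, so that $x\,g'(x) = x(N'D - ND')/D^2$. The computation at $x=\tfrac12$ is short once one notices the key simplification $D(\tfrac12) = 2^{-(k+m+2)} = N(\tfrac12)$, which comes from $2\cdot(\tfrac12)^{k+2} = (\tfrac12)^{k+1}$ so that both $1-2x$ and $x^{k+1}-2x^{k+2}$ vanish at $x=\tfrac12$; this is equivalent to the statement that the game ends with probability $1$, i.e.\ $g(\tfrac12)=1$. Then $x\,g'(x)|_{1/2} = \tfrac12\bigl(N'(\tfrac12)-D'(\tfrac12)\bigr)/D(\tfrac12)$, and a direct differentiation (again using $2(k+2)(\tfrac12)^{k+1} = (k+2)(\tfrac12)^k$ so the $x^k$ and $x^{k+1}$ derivative terms combine to $-(\tfrac12)^k$) gives $N'(\tfrac12)-D'(\tfrac12) = 2 + 2^{-k}$. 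Dividing by $D(\tfrac12)=2^{-(k+m+2)}$ and multiplying by $\tfrac12$ produces $(1+2^{-k-1})2^{k+m+2} = 2^{k+m+2}+2^{m+1}$, as claimed; an appeal to Lemma~\ref{lem:id} can replace the closed-form evaluation if one instead expands the relevant series directly.

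I expect the main obstacle to be the bookkeeping in the first step: correctly tracking which boundary terms survive when summing the recurrence, while keeping in mind that the definition forces the recurrence to begin at $n=k+m$ with initial value $\widetilde F_{k+m-1}^{k,m}=1$. The evaluation in the last step is routine, the only trick being to spot the cancellations $1-2x\mapsto0$ and $x^{k+1}-2x^{k+2}\mapsto0$ at $x=\tfrac12$, after which everything collapses to the two-term answer.
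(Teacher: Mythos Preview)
Your proposal is correct and follows essentially the same route as the paper: derive the ordinary generating function $\sum_{n\ge0}\widetilde F_{n-3}^{k,m}x^n = x^{k+m+2}/(1-2x+x^{k+1}-2x^{k+2}+x^{k+m+2})$ from the defining recurrence and then evaluate $x\,g'(x)$ at $x=\tfrac12$. The paper's proof is terser (it simply states the generating function and asserts the evaluation), whereas you spell out the boundary-term check and the cancellations $1-2x\to0$, $x^{k+1}-2x^{k+2}\to0$ at $x=\tfrac12$; these details are all correct.
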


\begin{proof}
By the definition of $F_n^{k,m}$, we have the generating function
\[ \sum_{n\ge0} \widetilde F_{n-3}^{k,m} x^n 
= \frac{x^{k+m+2}}{1-2x+x^{k+1}-2x^{k+2}+x^{k+m+2}}.
\]
%\[ 2xf(x) - x^{k+1}f(x) + \sum_{i=1}^m x^{k+i+1} f(x) = f(x) - x^{k+m+1}. \]
The desired identity then follows from evaluating $x$ times the derivative of the above generating function at $x=\frac12$.
\end{proof}

Lastly, we study $E_n(S)$, where $S$ is a string of length $s$ alternating between heads and tails.
Define $G_n^s = 0$ for $n<s$, $G_n^s=1$ for $n=s$, and for $n>s$,
\[ G_n^s = 
\begin{cases}
\sum_{i=1}^k (2G_{n-2i+1}^s - G_{n-2i}^s), & \text{if } s=2k; \\[.5em]
\sum_{i=1}^{k-1} (2G_{n-2i+1}^s - G_{n-2i}^s) + G_{n-2k+1}^s, & \text{if } s=2k-1.
\end{cases} \]
Special cases include $G_n^2 = \widetilde F_{n-1}^{1,1}$~\cite[A112575]{OEIS} %can be obtained by applying Chebyshev transform of the second kind to the Pell numbers.
and $G_n^3=F_{n-2}^{1,1}$~\cite[A005314]{OEIS}.

\begin{proposition}\label{prop:alternate}
Let $S$ be an alternating string of heads and tails with length $s$.
Then $E_n(S) = G_n^s$ for all $n\ge0$.
\end{proposition}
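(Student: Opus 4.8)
The plan is to verify that $E_n(S)$ obeys the same initial conditions and recurrence as $G_n^s$, and then invoke uniqueness. By the symmetry between heads and tails we may assume that $S$ begins with $\HH$, so that $S=(\HH\TT)^k$ when $s=2k$ and $S=(\HH\TT)^{k-1}\HH$ when $s=2k-1$. The initial conditions are immediate: a word of length $n<s$ cannot contain $S$, so $E_n(S)=0$; and a word of length exactly $s$ contains $S$ (necessarily only at the end) if and only if it equals $S$, so $E_s(S)=1$. It remains to establish, for $n>s$, the recurrence defining $G_n^s$.

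Write $E_n=E_n(S)$ and let $P_j$ denote the length-$j$ prefix of $S$, so that $P_1=\HH$, every $P_j$ alternates, and $P_s=S$. Two elementary relations get things started. Conditioning on the first flip and using that $S$ begins with $\HH$ gives $E_n(S\mid\TT)=E_{n-1}(S)$, hence
\[ E_n(S\mid P_1)=E_n-E_{n-1}. \]
Splitting on the $(j{+}1)$st flip, for $1\le j<s$, gives $E_n(S\mid P_j)=E_n(S\mid P_{j+1})+E_n(S\mid P_j\overline{c}_j)$, where $\overline{c}_j$ denotes the flip that does \emph{not} extend $P_j$ to $P_{j+1}$. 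Finally $E_n(S\mid P_s)=E_n(S\mid S)=0$ for $n>s$, since a word beginning with $S$ but longer than $S$ has a non-final occurrence of $S$.

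The core of the argument — and the step I expect to be the main obstacle — is the self-overlap computation generalizing Lemma~\ref{lem:HT'}: for $n>s$,
\[ E_n(S\mid P_j\overline{c}_j)=\begin{cases} E_{n-j}(S\mid P_1) & \text{if } j \text{ is odd},\\ E_{n-j-1}(S) & \text{if } j \text{ is even}.\end{cases} \]
The mechanism is that appending $\overline{c}_j$ repeats the last letter of the alternating word $P_j$, so $P_j\overline{c}_j$ ends in the double letter $\HH\HH$ (for $j$ odd) or $\TT\TT$ (for $j$ even) at positions $j$ and $j{+}1$. Since $S$ alternates, any occurrence of $S$ in a word $w$ that begins with $P_j\overline{c}_j$ and starts at a position $p\le j$ would force two consecutive equal letters of $S$ at its coordinates $j-p+1$ and $j-p+2$, which is impossible; moreover, for $j$ even the position $p=j{+}1$ is excluded as well, because there the $(j{+}1)$st letter $\TT$ of $w$ cannot match the first letter $\HH$ of $S$. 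Consequently $w$ contains $S$ only at the end exactly when the word gotten by deleting the first $j$ letters of $w$ (for $j$ odd) or the first $j{+}1$ letters (for $j$ even) does, and in the odd case the surviving leading letter is precisely $\HH=P_1$. Turning each of these exclusions into a rigorous statement requires checking that the coordinates $j-p+1,\,j-p+2$ really lie inside $S$, and one must separately reconcile the boundary range $s<n<s+j$, where both sides of the identity vanish; this is where the care is needed.

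Granting the displayed identity, telescope the relations $E_n(S\mid P_j)=E_n(S\mid P_{j+1})+E_n(S\mid P_j\overline{c}_j)$ over $j=1,\dots,s-1$ and use $E_n(S\mid P_s)=0$ to get
\[ E_n(S\mid P_1)=\sum_{j\ \mathrm{odd}} E_{n-j}(S\mid P_1)+\sum_{j\ \mathrm{even}} E_{n-j-1}(S), \]
both sums ranging over $1\le j\le s-1$. Now substitute $E_m(S\mid P_1)=E_m-E_{m-1}$ throughout and solve for $E_n$. Splitting by the parity of $s$ and reindexing the odd $j$ by $j=2i-1$ and the even $j$ by $j=2i$, the right-hand side collapses to $\sum_{i=1}^k\bigl(2E_{n-2i+1}-E_{n-2i}\bigr)$ when $s=2k$ and to $\sum_{i=1}^{k-1}\bigl(2E_{n-2i+1}-E_{n-2i}\bigr)+E_{n-2k+1}$ when $s=2k-1$; these are exactly the defining recurrences of $G_n^s$, completing the proof. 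This last rearrangement is routine.
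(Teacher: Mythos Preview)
Your proof is correct and follows essentially the same approach as the paper: both condition on the first position where the outcome sequence diverges from the alternating prefix of $S$, use the double-letter observation to discard the leading block, and substitute $E_n(S\mid\HH)=E_n-E_{n-1}$ to obtain the $G_n^s$ recurrence. Your prefix notation $P_j$, $\overline{c}_j$ and the telescoping formulation are a slightly more systematic packaging of what the paper writes out term-by-term, and your verification that the coordinates $j-p+1,\,j-p+2$ lie inside $S$ (using $1\le p\le j\le s-1$) is a detail the paper leaves implicit; the boundary range $s<n<s+j$ works uniformly under your bijection, so that caution is not actually needed.
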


\begin{proof}
Let $E_n = E_n(S)$.
Then $E_n=0$ for $n<s$ and $E_n=1$ for $n=s$. 
Assume $n>s$ below.
By symmetry, we may assume that $S$ begins with a heads.
If $s=2k$ then $S=(\HH\TT)^k$ and
\begin{align*}
E_n & = E_n(S|\TT) + E_n(S|\HH\HH) + E_n(S|\HH\TT^2) + E_n(S|\HH\TT\HH^2) + \cdots + E_n(S|(\HH\TT)^{k-1}\HH^2) + E_n(S|S) \\
&= E_{n-1} + E_{n-1}(S|\HH) + E_{n-3} + E_{n-3}(S|\HH) + \cdots + E_{n-2k+1} + E_{n-2k+1}(S|\HH) + \delta_{n,2k} \\
&= 2E_{n-1} - E_{n-2} + 2E_{n-3} - E_{n-4} + \cdots + 2E_{n-2k+1} - E_{n-2k}.
\end{align*}
Similarly, if $s=2k-1$ then $S=(\HH\TT)^{k-1}\HH$ and
\begin{align*}
E_n & = E_n(S|\TT) + E_n(S|\HH\HH) + E_n(S|\HH\TT^2) + E_n(S|\HH\TT\HH^2) + \cdots + E_n(S|(\HH\TT)^{k-1}\TT) + E_n(S|S) \\
&= E_{n-1} + E_{n-1}(S|\HH) + E_{n-3} + E_{n-3}(S|\HH) + \cdots + E_{n-2k-1} + \delta_{n,2k-1} \\
&= 2E_{n-1} - E_{n-2} + 2E_{n-3} - E_{n-4} + \cdots + E_{n-2k-1}.
\end{align*}
In either case we have $E_n=G_n^s$.
\end{proof}

Combining Proposition~\ref{prop:alternate} with Eq.~\eqref{eq:E_n} and Theorem~\ref{thm:alternate} gives the following corollary.

\begin{corollary}\label{cor:alternate}
For any integer $k\ge1$, 
$\displaystyle \sum_{n=0}^\infty \frac{nG_n^{2k}}{2^{n}} %= \sum_{i=1}^k 2^{2i} 
= \frac{2^{2k+2}-4}{3}$ and
$\displaystyle \sum_{n=0}^\infty \frac{nG_n^{2k-1}}{2^{n}} %= \sum_{i=1}^{k} 2^{2i-1} 
= \frac{2^{2k+1}-2}{3}$.
\end{corollary}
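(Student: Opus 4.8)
The plan is to obtain Corollary~\ref{cor:alternate} by feeding Proposition~\ref{prop:alternate} into the expansion Eq.~\eqref{eq:E_n} and then reading off the value from Theorem~\ref{thm:alternate}, in exact parallel with how Corollary~\ref{cor:id1} follows from Proposition~\ref{prop1}, Eq.~\eqref{eq:E_n}, and Theorem~\ref{thm1}.

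Concretely, for the first identity I would take $S = (\HH\TT)^k$, which is an alternating string of even length $s = 2k$. Proposition~\ref{prop:alternate} gives $E_n(S) = G_n^{2k}$ for all $n \ge 0$, so Eq.~\eqref{eq:E_n} yields
\[ \sum_{n=0}^\infty \frac{n G_n^{2k}}{2^n} = \sum_{n=0}^\infty \frac{n E_n(S)}{2^n} = E(S) = E((\HH\TT)^k) = \frac{2^{2k+2}-4}{3}, \]
where the last equality is Theorem~\ref{thm:alternate}. For the second identity I would instead take $S = (\HH\TT)^{k-1}\HH$, an alternating string of odd length $s = 2k-1$; then $E_n(S) = G_n^{2k-1}$ by Proposition~\ref{prop:alternate}, and Eq.~\eqref{eq:E_n} together with Theorem~\ref{thm:alternate} gives $\sum_{n\ge 0} n G_n^{2k-1}/2^n = E((\HH\TT)^{k-1}\HH) = (2^{2k+1}-2)/3$.

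There is no real obstacle here, since the substantive work lives in Proposition~\ref{prop:alternate} and Theorem~\ref{thm:alternate}; the only thing deserving a remark is that the term-by-term reading of Eq.~\eqref{eq:E_n} is legitimate because $E_n(S) \le 2^n$ and $E(S)$ is finite, so the series converges. As an independent check (and to mirror Remark~\ref{rem1} and the proofs of Corollaries~\ref{cor:id2} and~\ref{cor:id3}), one could instead solve the defining recurrence for $G_n^s$ — handling $s = 2k$ and $s = 2k-1$ separately — to get a rational generating function $\sum_{n\ge 0} G_n^s x^n$, and then evaluate $x$ times its derivative at $x = \frac12$, simplifying with Lemma~\ref{lem:id}; the mildly tedious step in that route is summing the telescoping and geometric pieces of the recurrence to pin down the closed form of the generating function.
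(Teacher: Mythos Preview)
Your proposal is correct and matches the paper's own proof, which simply combines Proposition~\ref{prop:alternate}, Eq.~\eqref{eq:E_n}, and Theorem~\ref{thm:alternate} exactly as you describe. The generating-function alternative you mention is likewise noted in the paper (Remark~\ref{rem:alternate}).
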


\begin{remark}\label{rem:alternate}
We can prove Corollary~\ref{cor:alternate} by generating functions.
For any integer $k\ge1$, we have
\[ \sum_{n=0}^\infty G_n^s x^n = 
\begin{cases}
\frac{x^{2k}}{1-\sum_{i=1}^k (2x^{2i-1}-x^{2i})} 
= \frac{x^{2k}(1-x^2)}{1-2x+2x^{2k+1}-x^{2k+2}} 
= \frac{x^{2k}(1+x)}{1-\sum_{i=1}^{2k} x^i+x^{2k+1}},
&\text{if } s = 2k; \\[1em]
\frac{x^{2k-1}}{1-\sum_{i=1}^{k-1} (2x^{2i-1}-x^{2i}) - x^{2k-1}} 
= \frac{x^{2k-1}(1-x^2)}{1-2x+x^{2k-1}-x^{2k}+x^{2k+1}} 
= \frac{x^{2k-1}(1+x)}{1-\sum_{i=1}^{2k-2} x^i - x^{2k}}, 
&\text{if } s = 2k-1.
\end{cases} \]
We then take the derivative, multiply by $x$ and substitute in $x=\frac12$ to obtain the desired identities.
\end{remark}

\section{Questions}\label{sec:questions}

In this paper, we determine the expected number of coin flips to produce a given finite string which either has at most four maximal runs of heads or tails or alternates between heads and tails; 
the result is always a sum of powers of $2$.
It would be nice to provide an intuitive explanation for this.
We also have the following conjecture for an arbitrary ending string based on our result.

\begin{conjecture}
Suppose $S\notin\{\HH^k,\TT^k\}$ is a string consisting of a total of $s$ heads and tails.
Then $E(S)$ equals $2^s$ possibly plus some lower positive powers of $2$, so asymptotically, $E(S)$ is about $2^s$.
\end{conjecture}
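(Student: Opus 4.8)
The plan is to recognize $E(S)$ as the classical \emph{correlation}, or Conway leading-number, quantity attached to the string $S$. For $S=S_1S_2\cdots S_s$, call an integer $j\in\{1,\dots,s\}$ a \emph{border length} of $S$ if $S_1\cdots S_j=S_{s-j+1}\cdots S_s$, and let $B(S)$ be the set of border lengths; note that $s\in B(S)$ always. The crucial first step is to prove the identity
\[ E(S)=\sum_{j\in B(S)}2^{j}. \]
I see three ways to do this. (i) In the spirit of Section~\ref{sec:id}: the generating function $F_S(x)=\sum_{n\ge0}E_n(S)x^n$ for the number of strings in which $S$ first occurs at the end equals $F_S(x)=x^{s}\big/\big(x^{s}+(1-2x)C_S(x)\big)$, where $C_S(x)=\sum_{j\in B(S)}x^{\,s-j}$ is the Guibas--Odlyzko correlation polynomial; substituting $x=\tfrac12$ into $xF_S'(x)$ and simplifying with Lemma~\ref{lem:id} turns Eq.~\eqref{eq:E_n} into $E(S)=2^{s}C_S(\tfrac12)=\sum_{j\in B(S)}2^{j}$. (ii) A recursion in the style of Lemma~\ref{lem:RS}: if $S=S^{-}c$ with $c$ the last letter, then conditioning on the $s$-th flip gives $E(S)=2E(S^{-})+2-E(S_{[1,q]})$, where $q$ is the length of the longest proper suffix of $S_1\cdots S_{s-1}\overline{c}$ that is a prefix of $S$ (a Knuth--Morris--Pratt overlap), and induction on $s$ then yields the formula. (iii) One may simply invoke the standard optional-stopping (``team of gamblers'') proof.

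Granting the formula, the conjecture follows quickly. The border $j=s$ contributes exactly $2^{s}$. If $j=s-1$ were a border, then $S_1=S_2=\cdots=S_s$, i.e.\ $S\in\{\HH^{s},\TT^{s}\}$, which the hypothesis excludes; hence every border $j<s$ satisfies $j\le s-2$, and therefore
\[ E(S)=2^{s}+\sum_{\substack{j\in B(S)\\ j\le s-2}}2^{j}, \]
a sum of \emph{distinct} powers of $2$ whose largest term is $2^{s}$ and whose remaining (positive) terms are at most $2^{s-2}$. Consequently $2^{s}\le E(S)\le 2^{s}+(2^{s-2}+2^{s-3}+\cdots+2)=2^{s}+2^{s-1}-2<\tfrac32\cdot 2^{s}$, so $E(S)$ has the stated shape and $E(S)=\Theta(2^{s})$; equality $E(S)=2^{s}$ holds exactly when $S$ is unbordered, e.g.\ $S=\HH^{k}\TT$. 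I would add the caveat that ``$E(S)\sim 2^{s}$'' is not literally true in general --- by Theorem~\ref{thm:alternate} the alternating strings satisfy $E(S)\sim\tfrac43\,2^{s}$ --- so the asymptotic clause is best read as $E(S)=\Theta(2^{s})$ (indeed $2^{s}\le E(S)<\tfrac32\cdot 2^{s}$), which is exactly what the argument delivers.

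I expect the only real obstacle to be the first step, namely establishing $E(S)=\sum_{j\in B(S)}2^{j}$ in a self-contained way. Route (i) is a short computation once the correlation-polynomial form of $F_S(x)$ is available, but proving that form requires a Guibas--Odlyzko-type transfer argument (or the Goulden--Jackson cluster method), not the bespoke recursions used for short $S$ in Section~\ref{sec:id}; route (ii) is elementary but needs care with the failure-function bookkeeping --- one must pin down $q$ and describe how $B(S)$ is built from $B(S^{-})$ --- in order to check that the recursion is consistent with $\sum_{j\in B(S)}2^{j}$. Everything after the formula, namely the border combinatorics and the size estimates above, is immediate.
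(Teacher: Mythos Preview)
The paper does \emph{not} prove this statement; it is posed as an open conjecture in Section~\ref{sec:questions}. There is therefore no ``paper's own proof'' to compare against. Your proposal, by contrast, actually resolves the conjecture, and does so via a classical tool the paper appears not to invoke anywhere: the Conway leading-number (equivalently, Guibas--Odlyzko correlation) formula $E(S)=\sum_{j\in B(S)}2^{j}$. This identity is well known (Li's martingale ``team of gamblers'' argument, or the correlation-polynomial generating function), and once it is in hand your deduction is airtight: the trivial border $j=s$ gives $2^{s}$; a border of length $s-1$ forces $S_1=\cdots=S_s$, which is excluded; hence $E(S)=2^{s}+\sum_{j\in B(S),\,j\le s-2}2^{j}$, a sum of distinct powers of $2$ with $2^{s}\le E(S)<\tfrac32\cdot 2^{s}$. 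Your caveat that the ``asymptotically about $2^{s}$'' clause should be read as $E(S)=\Theta(2^{s})$ rather than $E(S)\sim 2^{s}$ is also well taken and sharpens the paper's informal wording.

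Two remarks. First, your formula instantly settles the paper's \emph{second} conjecture as well: reversal preserves the set of border lengths (if $S_1\cdots S_j=S_{s-j+1}\cdots S_s$, reverse both sides), so $B(S)=B(S')$ and hence $E(S)=E(S')$. Second, the only place your write-up is a proposal rather than a proof is step one, as you yourself flag: you sketch three routes to $E(S)=\sum_{j\in B(S)}2^{j}$ but do not carry any of them out. Route~(iii) is by far the cleanest to make self-contained (a one-paragraph optional-stopping argument), and I would recommend simply including it rather than pointing to the Guibas--Odlyzko machinery, which is heavier than needed here. With that paragraph added, you have a complete and elegant proof of both conjectures---something the paper does not attempt.
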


Moreover, we have $E(\HH^k\TT^\ell) = E(\HH^\ell\TT^k)$ by Theorem~\ref{thm1}, $E(\HH^k\TT^\ell\HH^m) = E(\HH^m\TT^\ell\HH^k)$ by Theorem~\ref{thm2}, and $E(\HH^k\TT^\ell\HH^m\TT^d) = E(\HH^d\TT^m\HH^\ell\TT^k)$ by Theorem~\ref{thm3}.
This leads to another conjecture.

\begin{conjecture}
If $S'$ is the reversal of an ending string $S$, then $E(S)=E(S')$.
\end{conjecture}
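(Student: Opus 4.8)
The plan is to prove $E(S)=E(S')$ by setting up a bijective-style correspondence at the level of the combinatorial quantities $E_n(S)$ rather than manipulating expectations directly. By Eq.~\eqref{eq:E_n}, it suffices to show $E_n(S)=E_n(S')$ for every $n\ge0$, where $E_n(S)$ counts strings of length $n$ in which the ending string $S$ occurs for the first time exactly at the end. The hoped-for mechanism is simple: given a string $w$ of length $n$, let $w^{\mathrm{rev}}$ be its reversal. If $S$ occurs in $w$ precisely as the final $|S|$ letters and nowhere earlier, one wants to conclude that $S'=S^{\mathrm{rev}}$ occurs in $w^{\mathrm{rev}}$ precisely as the final $|S'|$ letters and nowhere earlier. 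An occurrence of $S$ ending at position $j$ in $w$ corresponds to an occurrence of $S'$ \emph{starting} at position $n+1-j$ in $w^{\mathrm{rev}}$, i.e.\ ending at position $n+1-j+|S|-1 = n-j+|S|$. So the map $w\mapsto w^{\mathrm{rev}}$ does not by itself send ``$S$ first occurs at the end'' to ``$S'$ first occurs at the end''; reversal turns the last occurrence into the first and vice versa.

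The fix is to combine reversal with a ``time-reversal of the search'' argument. First I would reformulate $E_n(S)$: a string $w$ of length $n$ is counted by $E_n(S)$ iff $w$ ends with $S$ and contains no occurrence of $S$ with right-endpoint in $\{|S|,|S|+1,\dots,n-1\}$. Reversing, $w^{\mathrm{rev}}$ \emph{begins} with $S'$ and contains no occurrence of $S'$ with left-endpoint in $\{2,3,\dots,n-|S|+1\}$, equivalently no occurrence of $S'$ that starts anywhere except position $1$ among the ``early'' positions. This is the statement that $S'$ occurs in $w^{\mathrm{rev}}$ uniquely as a prefix. So reversal gives a bijection between $\{w:\ S\text{ first occurs at the end of }w\}$ and $\{v:\ S'\text{ occurs in }v\text{ only as a prefix}\}$. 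To close the argument I would then reverse the \emph{other} end: the second set is in bijection (again by reversal, or by the observation that ``$S'$ occurs only as a prefix of $v$'' is the mirror image of ``$(S')^{\mathrm{rev}}=S$ occurs only as a suffix of $v^{\mathrm{rev}}$'' applied to the string $S$ itself\ldots) Hmm — this looks circular, so the cleaner route is: count $E_n(S)$ directly via a first-occurrence decomposition that is manifestly reversal-symmetric, or invoke a known correlation-polynomial identity.

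The standard and cleanest tool is the Conway leading-number / correlation polynomial formalism (equivalently the Guibas--Odlyzko theory of string overlaps). For a pattern $S$ of length $s$, the generating function $\sum_n E_n(S)x^n/2^n$ (the ``waiting time'' generating function) is a rational function whose numerator and denominator are built entirely from the \emph{autocorrelation polynomial} $c_S(x)=\sum_{p} x^{p}$, the sum over all $p\ge0$ such that the length-$(s-p)$ prefix of $S$ equals the length-$(s-p)$ suffix of $S$. The key classical fact is $c_S(x)=c_{S^{\mathrm{rev}}}(x)$: a period/border of $S$ of a given length is a period/border of $S^{\mathrm{rev}}$ of the same length (borders reverse to borders). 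Since the expected waiting time is $E(S)=2\,c_S(1/2)$ (Conway's leading-number formula for a fair coin; more precisely $E(S)=\sum_{p:\,S[1..s-p]=S[p+1..s]} 2^{s-p}$, which for $S\notin\{\HH^s,\TT^s\}$ is exactly the ``$2^s$ plus lower powers of $2$'' shape predicted by the first conjecture), and since this formula depends only on the set of border-lengths of $S$, which is invariant under reversal, we get $E(S)=E(S')$ immediately.

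The main obstacle is not the mathematics but the choice of how self-contained to be. If the paper wants to avoid citing Conway/Guibas--Odlyzko, the honest work is: (i) prove the first-occurrence renewal identity $\bigl(\sum_n E_n(S)x^n/2^n\bigr)\cdot(\text{something}) = x^s c_S(1/2\!\cdot\!\text{shift})$ — really the identity $A(x) + x^{-s}\,\text{(waiting GF)} = \text{(waiting GF)}\cdot x^{-s}c_S$ rewritten — which is a short generating-function argument conditioning on the first time $S$ appears; (ii) prove $c_S = c_{S^{\mathrm{rev}}}$, a two-line lemma about borders; (iii) extract $E(S)$ by differentiating at $x=1/2$ or, better, by reading off $E(S)=\sum_{p\in P(S)}2^{s-p}$ directly where $P(S)$ is the period set, and note $P(S)=P(S^{\mathrm{rev}})$. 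I would present it in that order, with step (i) flagged as the one requiring care (one must correctly account for overlapping occurrences, which is exactly where the correlation polynomial enters), and remark that steps like $E(\HH^k\TT^\ell\HH^m\TT^d)=E(\HH^d\TT^m\HH^\ell\TT^k)$ then become transparent special cases, since one checks by inspection that the two patterns have the same border-length set. I would also note that this same computation resolves the first conjecture: $E(S)=\sum_{p\in P(S)}2^{s-p}$ is a sum of distinct powers of $2$ with $2^s$ always present (take $p=0$), and the only way no \emph{smaller} power appears is $P(S)=\{0\}$, i.e.\ $S$ is primitive with no nontrivial border — but $\HH^s$ and $\TT^s$ are the extreme opposite (every $p<s$ is a period), so for those the sum is $2^s+2^{s-1}+\cdots+2 = 2^{s+1}-2$, matching Theorem~\ref{thm1}, and in general the ``lower positive powers of $2$'' in the conjecture are precisely $\{2^{s-p}:p\in P(S),\ p\ge1\}$.
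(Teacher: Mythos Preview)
The paper does not prove this statement at all: it is presented as a conjecture, followed only by the heuristic that in a long run of flips the average gap between successive occurrences of $S$ equals the average gap between successive occurrences of $S'$ when the sequence is read backwards. Your proposal therefore goes well beyond what the paper offers.

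Your final approach is correct and standard: the Conway leading-number (equivalently Guibas--Odlyzko) formula expresses $E(S)$ as $\sum_{b\in B(S)} 2^{b}$, where $B(S)$ is the set of lengths $b$ for which the length-$b$ prefix and length-$b$ suffix of $S$ coincide, and the one-line observation that $B(S)=B(S^{\mathrm{rev}})$ (a border reversed is a border) immediately yields $E(S)=E(S')$. As you note, the same formula settles the paper's first conjecture as well, identifying the ``lower powers of $2$'' precisely with the nontrivial border lengths. This is a genuinely different and much stronger argument than the paper's heuristic; what it costs is the need either to cite the Conway/Guibas--Odlyzko result or to reprove it, and your step (i) is exactly where that work lives.

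One remark on the abandoned bijection attempt: you correctly observe that $w\mapsto w^{\mathrm{rev}}$ does not send ``$S$ first occurs at the end'' to ``$S'$ first occurs at the end,'' and then drop the goal of showing $E_n(S)=E_n(S')$. In fact that termwise equality \emph{is} true---the Guibas--Odlyzko generating function $\sum_n E_n(S)x^n = x^{s}/\bigl((1-2x)C_S(x)+x^{s}\bigr)$ depends only on the autocorrelation polynomial $C_S$, which equals $C_{S'}$---so your route through generating functions actually delivers the stronger conclusion $E_n(S)=E_n(S')$ for every $n$, not merely the equality of expectations. It would be worth stating that explicitly, since it sharpens the result and makes clear that no differentiation/evaluation step is even needed.
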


We do not have a rigorous proof for the above conjecture, but here is an intuitive argument: if one flips a coin a large number of times, then $E(S)$ is the average distance between an occurrence of $S$ and the next, but reading the outcomes backward, we have the same average distance between an occurrence of $S'$ and the next, so $E(S')=E(S)$.

One of our summation identities (the first one in Corollary~\ref{cor:id1}) partially overlaps with the sum $a_n = \sum_{k\ge1} \frac{k^n F_k}{2^{k+1}}$ studied by Benjamin, Neer, Otero, and Sellers~\cite{ProbFibSum} similarly via both a probabilistic view and a generating function approach.
It should be possible to generalize the above sum $a_n$ using higher order Fibonacci numbers or the two-parameter generalizations of Fibonacci numbers discussed in this paper.

Another way to extend our work is to study a similar game using a die instead of a coin.
For example, if $E$ is the expected number of times to throw a die with $c$ faces marked $1,\ldots, c$ to obtain a run of $k$ ones, then 
$\displaystyle E = \sum_{i=1}^k \frac{(c-1)(E+i)}{c^i} + \frac{k}{c^k}$, and solving this equation with the help of the formula $\displaystyle \sum_{i=1}^k \frac{i}{c^i} = \frac{c}{(c-1)^2} - \frac{(c-1)k+c}{(c-1)^2\cdot c^k}$ yields $\displaystyle E=\frac{c^{k+1}-c}{c-1}$. %\frac{c}{c-1}(c^k-1)$.
%$E = \sum_{i=1}^k \frac{5(E+i)}{6^i} + \frac{k}{6^k}$, and solving this equation using $\sum_{i=1}^k \frac{i}{6^i} = \frac{6}{5^2} - \frac{5k+6}{5^2\cdot6^k}$ yields so $E=\frac65(6^k-1)$.
Taking $c=2$ recovers our result $E(\HH^k) = 2^{k+1}-2$ in Theorem~\ref{thm1}.

One could also try to extend work of Ekhad and Zeilberger~\cite{EkhadZeilberger} 
and work of Segert~\cite{Segert} by investigating how often those strings of heads and tails discussed in this paper will occur when flipping a coin a large number of times.
Some of the results in this paper might be helpful in a similar way as how comparing $E(\HH\HH)=6$ with $E(\HH\TT)=4$ at least intuitively explains why $\HH\TT$ outnumbers $\HH\HH$ in the long run.

\end{document}